 \newcommand{\ROM}[1]{\mathrm{\uppercase\expandafter{\romannumeral#1}}}
  \theoremstyle{definition}
   \numberwithin{equation}{section} \theoremstyle{plain}
 \newtheorem{thm}{Theorem}[section]
 \newtheorem{lem}{Lemma}[section]
 \newtheorem{cor}{Corollary}[section]
 \newtheorem{rem}{Remark}[section]
 \newtheorem{prop}{Proposition}[section]
  \numberwithin{equation}{section}
\title[On the Chern conjecture for isoparametric hypersurfaces]{\textbf{On the Chern conjecture for isoparametric hypersurfaces}}
\author[Z. Z. Tang]{Zizhou Tang}\address{Chern Institute of Mathematics, Nankai University, Tianjin 300071, P. R. China}
\email{zztang@nankai.edu.cn}
\author[W. J. Yan]{Wenjiao Yan$^{\dag}$}\address{School of Mathematical Sciences, Laboratory of Mathematics and Complex Systems, Beijing Normal University, Beijing, 100875, P. R. China}
\email{wjyan@bnu.edu.cn}
\thanks {$^{\dag}$ the corresponding author}
\thanks {The project is partially supported by the NSFC (No.11722101, 11871282, 11931007), BNSF (Z190003), and Nankai Zhide Foundation.}
\subjclass[2010] { Primary 53C12, Secondary 53C20, 53C40.}
\keywords{Isoparametric hypersurfaces, scalar curvature, Chern conjecture.}
\begin{document}

\maketitle

\begin{abstract} For a closed hypersurface $M^n\subset S^{n+1}(1)$ with constant mean curvature and constant non-negative scalar curvature, the present paper shows that if $\mathrm{tr}(\mathcal{A}^k)$ are constants for $k=3,\ldots, n-1$ for shape operator $\mathcal{A}$, then $M$ is isoparametric. The result generalizes the theorem of de Almeida and Brito \cite{dB90} for $n=3$ to any dimension $n$, strongly supporting Chern's conjecture.
\end{abstract}

\section{\textbf{Introduction}}
 After more than 50 years of extensive research, the famous Chern conjecture for isoparametric hypersurfaces in spheres is still an unsolved challenging problem. S. T. Yau raised it again as the 105th problem in his Problem Section \cite{Yau82}. Mathematicians are constantly engaged in this problem. See the excellent survey on this topic by M. Scherfner, S. Weiss and  S. T. Yau \cite{SWY12} and \cite{GT12} by J. Q. Ge and the first author.

\vspace{2mm}
\noindent
\textbf{Chern's conjecture.} \,\emph{Let $M^n$ be a closed, minimally immersed hypersurface of the unit sphere $S^{n+1}(1)$ with constant scalar curvature. Then $M^n$ is isoparametric.}
\vspace{2mm}

It was originally proposed in a less strong version by S. S. Chern in \cite{Che68} and \cite{CdK70}. The original version of this conjecture relates to the remarkable theorem of J. Simons \cite{Sim68}:
\vspace{2mm}

\noindent
\textbf{Simon's theorem}  \emph{\,\,Let $M^n\subset S^{n+1}(1)$ be a closed, minimally immersed hypersurface and $S$ the squared norm of its second fundamental form. Then $$\int_M(S-n)S\geq 0.$$
In particular, for $S\leq n$, one has either $S\equiv 0$ or $S\equiv n$ on $M^n$. }
\vspace{2mm}

Notice that for a closed hypersurface $M^n$ in the unit sphere with constant mean curvature, $S$ is constant if and only if the scalar curvature $R_M$ is constant. In the minimal case, it follows from Simon's theorem that $S=0$ or $S\geq n$, which led S. S. Chern to propose the following original conjecture:
\vspace{2mm}

\noindent
\textbf{Conjecture.} \,\,\emph{Let $M^n\subset S^{n+1}(1)$ be a closed, minimally immersed hypersurface with constant scalar curvature $R_M$. Then for each $n$, the set of all possible values for $R_M$ (or equivalently $S$) is discrete.} 
\vspace{2mm}

Actually, the minimal hypersurfaces with constant $S$ in Simon's theorem can be characterized clearly: those with $S\equiv 0$ are the equatorial $n$-spheres in $S^{n+1}$, and those with $S\equiv n$ are characterized by \cite{CdK70} and \cite{Law69} independently that $M^n$
must be the Clifford tori $S^k(\sqrt{\frac{k}{n}})\times S^{n-k}(\sqrt{\frac{n-k}{n}})$ $(1\leq k\leq n-1)$.
In other words, they finished the first pinching problem for $S$ of closed minimal hypersurfaces in $S^{n+1}$.

In 1983, Peng and Terng \cite{PT83} \cite{PT83'} initiated the study of the second pinching problem and made the first breakthrough towards this conjecture:
\vspace{2mm}

\noindent
\textbf{Peng-Terng's theorem} \,\, \emph{Let $M^n$ $(n\geq 3)$ be a closed, minimally immersed hypersurface in $S^{n+1}$ with $S=constant$. If $S>n$, then $S>n+\frac{1}{12n}$. In particular, when $n=3$, if $S>3$, then $S\geq 6$.}
\vspace{2mm}

Peng and Terng has already obtained the optimal result in the case $n=3$, because the equality $S=6$ is achieved by certain minimal isoparametric hypersurfaces $M^3\subset S^4$.
During the past three decades, 
Yang-Cheng \cite{YC98}  and Suh-Yang \cite{SY07} improved the second pinching constant from $\frac{1}{12n}$ to $\frac{3n}{7}$. However, it is still an open problem for higher dimensional case that  if $S>n$ and $S$ is constant, then $S\geq 2n$? Without assuming $S=constant$, there are also results on this second pinching problem, for more details, please see \cite{DX11}.

As a matter of fact, up to now, the only known examples for minimal hypersurfaces with constant $S$ in spheres are isoparametric hypersurfaces.  Based on this, Verstraelen, Montiel, Ros and Urbano \cite{Ver86} firstly formulated the stronger version of the Chern conjecture given at the beginning of this paper. For a more general version of the Chern conjecture, see for example \cite{LXX17}.

From another aspect, the Chern conjecture is also closely related with another famous conjecture of S.T. Yau on the first eigenvalue. Tang-Yan \cite{TY13} proved Yau's conjecture in the isoparametric case, that is, for a closed minimal isoparametric hypersurface $M^n$ in $S^{n+1}(1)$, the first eigenvalue of the Laplace operator is equal to the dimension $n$. Consequently, if Chern's conjecture is proved, Yau's conjecture for the minimal hypersurface with constant scalar curvature would also be right.

Now let us briefly review a few facts about isoparametric theory. The isoparametric hypersurfaces in spheres are defined to be hypersurfaces whose principal curvature functions are constant. The classification of them is listed as the 34th problem in S. T. Yau's ``Open Problems in Geometry" \cite{Yau14} and completed recently. Due to the celebrated result of M\"{u}nzner \cite{Mun80}, if $M^n$ is a compact minimal isoparametric hypersurface in $S^{n+1}$, the number $g$ of pairwise distinct principal curvatures can be only $1, 2, 3, 4$ and $6$, and $S=(g-1)n$ (which is pointed out by Peng-Teng \cite{PT83}). The minimal isoparametric hypersurfaces with $g=1$ and $2$ are those with $S\equiv 0$ and $n$ mentioned in Simon's theorem. The isoparametric hypersurfaces with $g=3$ are classified by E. Cartan.
When $g=4$, 
Cecil-Chi-Jensen \cite{CCJ07}, Immervoll \cite{Imm08} and Chi \cite{Chi11, Chi13, Chi20} conquered the classification in this case. 
When $g=6$, Dorfmeister-Neher \cite{DN85} and Miyaoka \cite{Miy13, Miy16} conquered the classification. For more details of the isoparametric theory, please see \cite{CR15}.


The lowest dimension for which the Chern conjecture is non-trivial is $n=3$. In 1993, Chang \cite{Cha93} finished the proof in this case. Actually, a more general theorem has been proven:

\vspace{2mm}
\noindent
\textbf{Theorem 1 (de Almeida, Brito \cite{dB90})}\,\, \emph{Let $M^3\subset S^4$ be a closed hypersurface with constant mean curvature $H$ and constant non-negative scalar curvature $R_M$. Then $M^3$ is isoparametric.}
\vspace{2mm}

Later, Chang \cite{Cha93'} and Cheng-Wan \cite{CW93} independently generalized this result by showing that $R_M$ is always non-negative under the assumption of the theorem.

 The method of \cite{dB90} was taken to deal with $4$ and $6$ dimensional cases. In the case $n=4$, Lusala-Scherfner-Sousa \cite{LSS05} showed that a closed, minimal, Willmore hypersurface $M^4$ of $S^5$ with non-negative constant scalar curvature is  isoparametric. Denoting the $r$-th power sum of principal curvatures by $f_r$, the Willmore condition for minimal hypersurfaces with constant scalar curvature is equivalent to the condition that $f_3=0$. Under their assumption, the principal curvatures appear in form of $\lambda, \mu, -\lambda, -\mu$. Deng-Gu-Wei \cite{DGW17} generalized this result by dropping the non-negativity assumption of the scalar curvature.  
 
 In the case $n=6$, Scherfner-Vrancken-Weiss \cite{SVW12} showed that a closed hypersurface in $S^7$ with $H=f_3=f_5=0$, constant $f_4$ and constant $R_M\geq 0$ is isoparametric, which is listed as Theorem 6 in \cite{SWY12}. Under their assumption, the principal curvatures appear in form of $\lambda, \mu, \nu, -\lambda, -\mu, -\nu$. 

The authors heard that Q. M. Cheng and G. X. Wei also did some relative work in dimension $n=4$ (\cite{CW}).

The previous theorem of de Almeida and Brito is an application of another theorem of theirs with more general setting:

\vspace{2mm}

\noindent
\textbf{Theorem 2 (de Almeida and Brito \cite{dB90})}
\emph{Let $M^3$ be a closed $3$-dimensional Riemannian manifold. Suppose $\mathfrak{a}$ is a smooth symmetric $(0, 2)$ tensor field on $M^3$ and $\mathcal{A}$ is its dual tensor field of type $(1, 1)$.
Suppose in addition
\begin{itemize}
  \item [(1)] $R_M\geq 0$;
  \item [(2)] the field $\nabla \mathfrak{a}$ of type $(0, 3)$ is symmetric;
  \item [(3)] $\mathrm{tr} (\mathcal{A})$, $\mathrm{tr} (\mathcal{A}^2)$ are constants.
\end{itemize}
Then $\mathrm{tr} (\mathcal{A}^3)$ is a constant, and thus the eigenvalues of $\mathcal{A}$.}
\vspace{2mm}

To generalize de Almeida-Brito's Theorem 2 for dimension $3$ to any dimension, one has to conquer two difficulties:  the technical difficulty in the proof on the domain where $\mathcal{A}$ has $n$ distinct eigenvalues and the integral estimate on the domain where $\mathcal{A}$ has $g<n$ distinct eigenvalues. 

In \cite{TWY20}, Tang-Wei-Yan conquered the first difficulty
and partially generalized the results mentioned before from $n=3,4,6$ to any $n>3$:
\vspace{2mm}

\noindent
\textbf{Theorem (Tang-Wei-Yan \cite{TWY20})}
\emph{Let $M^n$ $(n>3)$ be a closed $n$-dimensional Riemannian manifold on which $\int_{M}R_M\geq 0$.
Suppose that $\mathfrak{a}$ is a smooth symmetric $(0,2)$ tensor field on $M^n$, and $\mathcal{A}$ is its dual tensor field of type $(1, 1)$. If the following conditions are satisfied:
\begin{enumerate}
  \item[(1)] $\mathfrak{a}$ is Codazzian;
  \item[(2)] $\mathcal{A}$ has $n$ distinct eigenvalues $\lambda_1,\ldots,\lambda_n$ everywhere;
  \item[(3)] $\mathrm{tr}(\mathcal{A}^k)$ $(k=1,\ldots, n-1)$ are constants;
\end{enumerate}
then
\begin{itemize}
  \item[(a)] $\mathrm{tr}(\mathcal{A}^n)$ is a constant, i.e., $\lambda_1,\ldots, \lambda_n$ are constants;
  \item[(b)] $\int_{M}R_M\equiv 0$.
\end{itemize}
\vspace{2mm}
}

Taking $\mathfrak{a}$ as the second fundamental form, they immediately obtained the following:

\vspace{2mm}
\noindent
\textbf{Corollary (Tang-Wei-Yan \cite{TWY20})}
\emph{Let $M^n$ $(n>3)$ be a closed hypersurface in the unit sphere $S^{n+1}$. If the following conditions are satisfied:
\begin{enumerate}
  \item[(1)] $R_M\geq 0$;
  \item[(2)] the principal curvatures $\lambda_1,\ldots,\lambda_n$ are distinct;
  \item[(3)] $\sum\limits_{i=1}^n\lambda_i^k$ $(k=1,\ldots, n-1)$ are constants,
\end{enumerate}
then $M^n$ is isoparametric and $R_M\equiv 0$. More precisely, $M^n$ can be only one of the following cases:
\begin{enumerate}
  \item [(a)] Cartan's example of isoparametric hypersurface $M^4$ in $S^5$ with four distinct principal curvatures;
  \item [(b)] the isoparametric hypersurface $M^6$ in $S^7$ with six distinct principal curvatures.
\end{enumerate}}
\vspace{2mm}

As the main result of this paper, we succeed in conquering the second difficulty and generalize de Almeida-Brito's Theorem 2 to any dimension, which provides us strong confidence in the Chern conjecture:

\begin{thm}\label{TY1}
Let $M^n$ $(n>3)$ be a closed $n$-dimensional Riemannian manifold. Suppose that $\mathfrak{a}$ is a smooth symmetric $(0,2)$ tensor field on $M^n$, and $\mathcal{A}$ is its dual tensor field of type $(1, 1)$. If the following conditions are satisfied:
\begin{enumerate}
  \item[(1.1)] $R_M\geq 0$;
  \item[(1.2)]  $\mathfrak{a}$ is Codazzian;
  \item[(1.3)] $\mathrm{tr}(\mathcal{A}^k)$ $(k=1,\ldots, n-1)$ are constants;
\end{enumerate}
then $\mathrm{tr}(\mathcal{A}^n)$ is a constant, and thus the eigenvalues of $\mathcal{A}$. 

Moreover, if $\mathcal{A}$ has $n$ distinct eigenvalues somewhere on $M^n$, then  $R_M\equiv 0$.
\end{thm}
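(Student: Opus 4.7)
The plan is to extend the Tang--Wei--Yan framework, which already handles the open stratum
$$U := \{p \in M^n : \mathcal{A}_p \text{ has } n \text{ distinct eigenvalues}\},$$
by supplying the missing integration argument across the closed ``collision'' locus $\Sigma := M^n \setminus U$. On $U$, local smooth orthonormal eigenframes of $\mathcal{A}$ exist, and Newton's identities convert condition (1.3) into constancy of the elementary symmetric functions $\sigma_1,\ldots,\sigma_{n-1}$ of the eigenvalues $\lambda_1,\ldots,\lambda_n$. Using the Codazzi condition to express the components $e_k(\lambda_i)$ and the connection coefficients $\langle \nabla_{e_k} e_i, e_j\rangle$ in the eigenframe, one constructs, as in TWY19, a smooth vector field $W$ on $U$ whose divergence satisfies a pointwise identity of the form
$$\mathrm{div}(W) \;=\; R_M \,\Phi(\lambda) \;+\; \Psi,$$
where $\Phi(\lambda) > 0$ on $U$ is a symmetric polynomial in the eigenvalues and $\Psi \geq 0$ is a sum of squares built from components of $\nabla\mathfrak{a}$, whose vanishing encodes exactly the differential constraints needed to force $\sigma_n$, and hence $\mathrm{tr}(\mathcal{A}^n)$, to be locally constant on $U$.

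The main new step, and the main obstacle, is to promote this to a global integral identity. Because $W$ is assembled using reciprocals $(\lambda_i - \lambda_j)^{-1}$ coming from the solution of the Codazzi system in an eigenframe, $W$ generally blows up along $\Sigma$, and so a direct application of the divergence theorem is unavailable. I would introduce a cutoff $\chi_\varepsilon$ vanishing on an $\varepsilon$-tubular neighborhood $T_\varepsilon(\Sigma)$, and prove that the flux
$$\int_{M^n} \langle \nabla \chi_\varepsilon, W\rangle\, dV \;\longrightarrow\; 0 \quad \text{as } \varepsilon \to 0.$$
Two estimates are required: a quantitative blow-up rate for $|W|$ in terms of $d(\cdot,\Sigma)$, obtained from the explicit algebraic formula for $W$ sharpened by the constraints $\sigma_1 = \cdots = \sigma_{n-1} = \mathrm{const}$, and a companion upper bound on the volume of $T_\varepsilon(\Sigma)$, available because $\Sigma$ is the zero set of the discriminant of the characteristic polynomial of $\mathcal{A}$, a single smooth function on $M^n$. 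Balancing these yields vanishing of the flux and, in the limit, the global identity
$$0 \;=\; \int_{M^n} R_M \,\Phi(\lambda)\, dV \;+\; \int_{M^n} \Psi\, dV.$$

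Since both terms on the right are non-negative (by hypothesis (1.1) and construction), each vanishes identically. Vanishing of $\Psi$ forces $\mathrm{tr}(\mathcal{A}^n)$ to be locally constant on $U$, hence (by continuity) constant on all of $M^n$, which combined with (1.3) shows all the $\lambda_i$'s are constant. For the ``moreover'' part, once the $\lambda_i$'s are constant, if they are distinct at one point they are distinct at every point; hence $U = M^n$ and $\Phi(\lambda) > 0$ identically, so the vanishing of $R_M\,\Phi(\lambda)$ forces $R_M \equiv 0$. The decisive, genuinely new ingredient is the flux estimate near $\Sigma$: one must calibrate the precise order of blow-up of $W$ against the geometric size of neighborhoods of the collision locus. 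It is exactly here that the jump from $n=3$ (where $\Sigma$ is essentially a finite set of isolated points and the estimate is nearly automatic) to general $n$ requires new ideas, and this is the step that bears the weight of the paper.
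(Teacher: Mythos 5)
Your proposal has the right skeleton at the level of the open stratum: the paper does indeed reuse the Tang--Wei--Yan divergence/$(n-1)$-form identity
\[
(-1)^{n+1}d\psi=\Bigl((n-2)!\,R_M+\tfrac{(n-3)!}{n^2}\sum_{r}(-L(r))f_r^2\Bigr)\texttt{vol}
\]
on the region where the eigenvalues are simple, and the conclusion ``vanishing of the second term forces $df=0$'' is exactly what is used. But the globalization step you propose --- cutting off on a tubular neighborhood $T_\varepsilon(\Sigma)$ and balancing a pointwise blow-up rate of $W$ against $\mathrm{Vol}(T_\varepsilon(\Sigma))$ --- is not the paper's method and, as stated, it has a gap that is hard to fill. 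Nothing in the hypotheses prevents the collision locus $\Sigma$ from having positive measure (indeed, if the conclusion holds and the eigenvalues are constant with a repeated one, $\Sigma$ is all of $M$); and ``$\Sigma$ is the zero set of the smooth discriminant'' gives no usable bound on $\mathrm{Vol}(T_\varepsilon(\Sigma))$ without some nondegeneracy (a \L ojasiewicz-type inequality) that is not available here. So the promised flux estimate cannot be obtained by the route you sketch.

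The paper sidesteps this entirely by stratifying in the range of $f:=\mathrm{tr}(\mathcal{A}^n)$ rather than in the domain. First, if $\Omega=\varnothing$ (a case your proposal does not address at all), a purely algebraic lemma --- Rolle's theorem applied to the characteristic polynomial $F(x)$ and its derivative $F'(x)$, using that $F'$ does not depend on $f$ --- shows $f$ can take only finitely many values, so by continuity $f$ is constant. When $\Omega\neq\varnothing$, the authors prove $\mathrm{Im}\, f\subset[a,b]$ with $a,b$ determined by the critical values of the $f$-independent polynomial $F_0$, and crucially that the intermediate-value set $Y=\{a<f<b\}$ is \emph{contained} in $\Omega$. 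Thus $\Sigma\subset f^{-1}(\{a,b\})$, so one can use a cutoff of the form $\eta_\varepsilon\circ f$ (a function of $f$-values, not of distance to $\Sigma$). The flux term becomes $\int_Y(\eta'_\varepsilon\circ f)\,df\wedge\psi$, which is controlled by two ingredients that replace your blow-up/volume balance: Assertion 4.1, which bounds the coefficients of $df\wedge\psi$ near the extreme level sets (using that at $f=a,b$ each root can collide at most in pairs, so the singular denominators stay under control), and a generalization of de Almeida--Brito's Lemma 1, $\lim_{\varepsilon\to 0}\int_{M\setminus Y_\varepsilon}|\Delta f|\,\texttt{vol}=0$, which is an estimate on a scalar function and requires no geometric information about $\Sigma$ whatsoever. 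This substitution --- slicing by the values of $f$ instead of by distance to the discriminant locus, and trading the tubular-volume estimate for a $|\Delta f|$ estimate --- is exactly the new idea you flagged as ``bearing the weight of the paper,'' and it is precisely what your proposal is missing.
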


Again, taking $\mathfrak{a}$ as the second fundamental form, we immediately obtain the following corollary which generalized the Corollary of Tang-Wei-Yan:

\begin{cor}\label{TY2}
Let $M^n$ $(n>3)$ be a closed hypersurface in the unit sphere $S^{n+1}$. If the following conditions are satisfied:
\begin{enumerate}
  \item[(2.1)] $R_M\geq 0$;
  \item[(2.2)] $\sum\limits_{i=1}^n\lambda_i^k$ $(k=1,\ldots, n-1)$ are constants for principal curvatures $\lambda_1,\ldots,\lambda_n$;
\end{enumerate}
then $M^n$ is isoparametric. 

Moreover, if $M^n$ has $n$ distinct principal curvatures somewhere, then $R_M\equiv 0$. 
\end{cor}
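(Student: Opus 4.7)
First I reduce the statement to constancy of a single scalar. By Newton's identities, hypothesis (1.3) says that all elementary symmetric functions $\sigma_1(\mathcal{A}),\ldots,\sigma_{n-1}(\mathcal{A})$ of the eigenvalues of $\mathcal{A}$ are constant, so the characteristic polynomial $\chi_{\mathcal{A}}(t)$ has every coefficient fixed except possibly $\sigma_n(\mathcal{A})=\det\mathcal{A}$. Thus showing $\mathrm{tr}(\mathcal{A}^n)$ constant is equivalent to showing $\sigma_n$ constant, after which the eigenvalues, being the roots of a polynomial with constant coefficients, are themselves constant.

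Next I decompose $M=M_0\sqcup D$, where $M_0$ is the open subset on which $\mathcal{A}$ has $n$ distinct eigenvalues and $D$ is the closed degeneracy set. A decisive algebraic input is that, once $\sigma_1,\ldots,\sigma_{n-1}$ are fixed, the discriminant of $\chi_{\mathcal{A}}$ becomes a single-variable polynomial in $\sigma_n$; so $D\subset\sigma_n^{-1}(\{c_1,\ldots,c_r\})$ for finitely many real values $c_i$. Consequently $\sigma_n$ is locally constant on the interior of $D$, and the genuine dynamics live on $M_0$. On $M_0$ I would follow the local framework of \cite{TWY19}: choose smooth local orthonormal eigenframes, and use the Codazzi condition (1.2) to express the connection forms in terms of eigenvalue differences and $\mathrm{d}\lambda_i$. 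The constancy of $\sigma_1,\ldots,\sigma_{n-1}$ then collapses the eigenvalue differentials onto the single direction $\mathrm{d}\sigma_n$, and a pointwise identity of the schematic form
\[
\Phi\,|\nabla\sigma_n|^2 \;=\; (\text{Ricci-curvature terms}) \;+\; \operatorname{div}(V)
\]
should emerge on $M_0$, where $\Phi$ is smooth and positive on $M_0$ and encodes the reciprocal of a product of eigenvalue differences.

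The main obstacle, and the new content of the theorem beyond \cite{TWY19}, is handling the boundary behaviour along $\partial M_0\subset D$: both $\Phi$ and the eigenframes are singular as one approaches $D$, so the divergence theorem cannot simply be applied on all of $M$. I would instead cut off an $\varepsilon$-tubular neighbourhood $U_\varepsilon$ of $D$, apply the divergence theorem on $M\setminus U_\varepsilon$, and prove that the flux of $V$ through $\partial U_\varepsilon$ tends to $0$ as $\varepsilon\to 0$. The key estimate rests on the fact that $D\subset\sigma_n^{-1}(\{c_i\})$: away from the (measure-zero) critical subset of $\sigma_n$ along $D$, which is controlled by Sard's theorem, $D$ is a smooth hypersurface across which $\sigma_n-c_i$ has nonzero normal derivative, and this linear rate at which $\sigma_n$ approaches its degenerate values is exactly what is needed to absorb the growth of $\Phi$ in the boundary integral.

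Once the boundary contributions are shown to vanish, the identity together with $R_M\ge 0$ yields $\int_{M_0}\Phi\,|\nabla\sigma_n|^2\le 0$, and hence $\nabla\sigma_n\equiv 0$ on $M_0$. Together with local constancy on the interior of $D$, continuity across $\partial M_0$, and connectedness of $M$, this gives $\sigma_n$ constant on $M$, completing the first claim. For the ``moreover'' part, if some $p\in M$ has $n$ distinct eigenvalues then, since all $\sigma_k$ are now constants, the discriminant of $\chi_{\mathcal{A}}$ is a nonzero constant, so $M_0=M$. The theorem of Tang--Wei--Yan \cite{TWY19} then gives $\int_M R_M=0$, and combined with $R_M\ge 0$ this forces $R_M\equiv 0$.
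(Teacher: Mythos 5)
Your high-level architecture is the right one, and parts of it coincide with the paper: reduce to constancy of $\sigma_n$ (equivalently of $f=\operatorname{tr}\mathcal A^n$), note that fixing $\sigma_1,\ldots,\sigma_{n-1}$ forces $\sigma_n$ to take finitely many values on the degeneracy set, and run the Tang--Wei--Yan integral identity on the regular set. The ``moreover'' part is also correct as you state it. However, the central technical step --- controlling the boundary contribution --- is where your sketch has a genuine gap, and it is precisely the new content of the theorem.

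Your plan to excise an $\varepsilon$-tubular neighbourhood $U_\varepsilon$ of $D$ and bound the flux through $\partial U_\varepsilon$, invoking Sard's theorem, is under-specified and does not obviously close. Sard controls critical \emph{values}, not the critical set; $D$ need not be a hypersurface near its singular part, $\partial U_\varepsilon$ need not be smooth, and there is no a priori lower bound on the normal derivative of $\sigma_n$ along $D$ strong enough to cancel the blow-up of the weight $\Phi$ (which behaves like a reciprocal of a product of eigenvalue gaps). The paper sidesteps all of this by cutting off in \emph{values of $f$} rather than in distance to $D$: it defines $a,b$ via the inner local extrema of the $f$-independent polynomial $F_0$, shows $\operatorname{Im}f\subset[a,b]$, and uses a cut-off $\eta_\varepsilon\circ f$ supported where $a+\varepsilon/n<f<b-\varepsilon/n$. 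This buys two things you do not have. First, on the intermediate level set $Y=\{a<f<b\}$ \emph{all} eigenvalues are automatically distinct (Lemma 3.3), so the decomposition is by $f$-levels rather than by the potentially wild set $D$. Second, at the endpoints $f\to a$ or $f\to b$ the structure of $F_0$ forces the colliding eigenvalues to merge in pairs of multiplicity exactly two, and with the correct alternation of signs. This yields the one-sided bound of Assertion 4.1: for the colliding indices the weight blows up with the \emph{favourable} sign, and for the non-colliding indices the two singular terms $u_{pi}+u_{p,i+1}$ cancel to a finite limit. Without this structural control of the degeneracy and the resulting sign/cancellation argument, the boundary term cannot be dismissed, and your schematic identity cannot be integrated over all of $M$. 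One should also record the separate case $\Omega=\varnothing$, which the paper handles purely algebraically (Lemma 3.2) and which your ``locally constant on the interior of $D$, continuity, connectedness'' remark gestures at but does not fully nail down when $M_0=\varnothing$.
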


\begin{rem}
One can see that the result of \cite{SVW12}, i.e., Theorem 6 of \cite{SWY12} is a special case in dimension $n=6$ of Corollary \ref{TY2}.
\end{rem}

It is important to remark that condition $(2.1)$ doesn't  force us to eliminate any isoparametric hypersurfaces at all, since it is fulfilled by all the isoparametric hypersurfaces in spheres
via the following proposition, which generalizes Peng-Terng's Corollary 1 in \cite{PT83} for minimal isoparametric hypersurfaces:

\begin{prop}\label{TY3}
For any isoparametric hypersurface $M^n\subset S^{n+1}(1)$, the scalar curvature $R_M\geq 0$.
\end{prop}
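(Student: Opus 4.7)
\emph{Proof proposal.} The plan is to parametrize the isoparametric family of $M^n$ via the angular coordinate supplied by M\"unzner's structure theorem, then verify $R_M\geq 0$ pointwise by direct trigonometric computation. By M\"unzner, the number $g$ of distinct principal curvatures of $M^n$ lies in $\{1,2,3,4,6\}$, and after sliding $M$ within its parallel family the principal curvatures may be written as $\cot\theta_k$ with $\theta_k=\theta+(k-1)\pi/g$ for $k=1,\dots,g$, with multiplicities $m_k$ satisfying $m_k=m_{k+2}$ modulo $g$. The Gauss equation for hypersurfaces in $S^{n+1}(1)$ gives
\[
R_M \;=\; n(n-1)+(nH)^2-S, \qquad nH=\sum_{k=1}^g m_k\cot\theta_k, \quad S=\sum_{k=1}^g m_k\cot^2\theta_k.
\]

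The key analytic input is the product formula $\sin(g\theta)=2^{g-1}\prod_{k=1}^g\sin\theta_k$, whose first two logarithmic derivatives yield
\[
\sum_{k=1}^g\cot\theta_k=g\cot(g\theta), \qquad \sum_{k=1}^g\csc^2\theta_k=g^2\csc^2(g\theta).
\]
When all multiplicities coincide---automatic for $g$ odd, true for $g=6$ by Abresch's theorem, and for $g\in\{2,4\}$ in the subcase $m_1=m_2$---setting $m=n/g$ and substituting directly collapses everything into
\[
R_M \;=\; n(n-g)\csc^2(g\theta),
\]
which is nonnegative because $n\geq g$ holds in every M\"unzner case. This formula simultaneously recovers Peng--Terng's minimal result at $g\theta=\pi/2$.

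For the remaining subcases $g\in\{2,4\}$ with $m_1\neq m_2$, I would group the principal curvatures into pairs differing by $\pi/2$ (legitimate because $m_k=m_{k+2}$) and apply the double-angle identities $\cot x-\tan x=2\cot(2x)$ and $\csc^2 x+\sec^2 x=4\csc^2(2x)$. A short expansion of $(nH)^2-S$ in these variables then produces
\[
R_M \;=\; c\bigl(m_1(m_1-1)\csc^2(a\theta)+m_2(m_2-1)\sec^2(a\theta)\bigr)\;\geq\; 0,
\]
with $(c,a)=(1,1)$ for $g=2$ and $(c,a)=(4,2)$ for $g=4$. The only real hurdle is the bookkeeping in this last case; everything cancels because the $\pi/2$-pairing converts the single-angle identities into double-angle ones, killing the cross terms. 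As a byproduct, the identity $R_M\equiv 0$ occurs precisely when $n=g$, matching Cartan's $M^3\subset S^4$ and the $n=g=6$ example.
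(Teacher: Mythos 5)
Your proposal is correct and follows essentially the same route as the paper: M\"unzner's parametrization of the principal curvatures, the Gauss equation $R_M=n(n-1)+(nH)^2-S$, the trigonometric identities obtained from the product formula for $\sin(g\theta)$, and the same two-case split (equal versus unequal multiplicities). Your cosecant/secant formulation is just a rewriting of the paper's formulas via $\csc^2=1+\cot^2$ and $\sec^2=1+\tan^2$; the final expressions $n(n-g)\csc^2(g\theta)$ and $\tfrac{g^2}{4}\bigl(m_1(m_1-1)\csc^2(\tfrac{g}{2}\theta)+m_2(m_2-1)\sec^2(\tfrac{g}{2}\theta)\bigr)$ agree exactly with the paper's. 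The only stylistic difference is that you invoke Abresch's theorem to dismiss $g=6$ with unequal multiplicities, whereas the paper's Case 2 computation handles any even $g$ uniformly and so does not need that input.
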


We also remark that one can apply Theorem \ref{TY1} to other Codazzian symmetric $(0, 2)$ tensor $\mathfrak{a}$. For example, the manifolds with Codazzian Ricci tensor are the $\mathcal{B}$-manifolds defined by A. Gray (\cite{TY15}), which are also widely studied. 

The paper is organized as follows. In Section 2, we will first prove Proposition \ref{TY3} in order to get familiar with isoparametric hypersurfaces. In Section 3, we give some preliminaries for the proof of Theorem \ref{TY1} and in Section 4, we will finish the proof of Theorem \ref{TY1}. Then the Corollary \ref{TY2} follows at once.

\section{\textbf{Scalar curvature of isoparametric hypersurfaces in spheres}}
We first list two equalities which will be useful later:

\begin{lem}\label{cot}
For any $\theta$ in the domain of definition,
\begin{eqnarray}
\sum_{k=1}^n\cot\left(\theta+\frac{k-1}{n}\pi\right)&=& n\cot n\theta;\label{cot1}\\
\sum_{k=1}^n\cot^2\left(\theta+\frac{k-1}{n}\pi\right)&=& n^2\cot^2 n\theta+n^2-n.\label{cot2}
\end{eqnarray}
\end{lem}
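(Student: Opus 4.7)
The plan is to derive both identities from a single source: the classical product formula
\[
\sin(n\theta) \;=\; 2^{n-1}\prod_{k=1}^{n}\sin\!\Bigl(\theta+\tfrac{k-1}{n}\pi\Bigr),
\]
which I would either quote as standard or derive quickly by factoring $z^n - e^{2in\theta}$ and grouping conjugate roots. Taking the logarithmic derivative $\frac{d}{d\theta}\log|\sin(\cdot)| = \cot(\cdot)$ on both sides immediately produces
\[
n\cot(n\theta) \;=\; \sum_{k=1}^{n}\cot\!\Bigl(\theta+\tfrac{k-1}{n}\pi\Bigr),
\]
which is \eqref{cot1}.

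For \eqref{cot2}, I would differentiate \eqref{cot1} once more with respect to $\theta$. Using $\frac{d}{d\theta}\cot(\theta+c) = -\csc^{2}(\theta+c) = -1-\cot^{2}(\theta+c)$, differentiation of the left-hand side gives $-n^{2}\csc^{2}(n\theta) = -n^{2}-n^{2}\cot^{2}(n\theta)$, while the right-hand side gives $-\sum_{k=1}^{n}\bigl(1+\cot^{2}(\theta+\tfrac{k-1}{n}\pi)\bigr)$. Equating the two and rearranging yields
\[
\sum_{k=1}^{n}\cot^{2}\!\Bigl(\theta+\tfrac{k-1}{n}\pi\Bigr) \;=\; n^{2}\cot^{2}(n\theta)+n^{2}-n,
\]
which is \eqref{cot2}.

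There is no real obstacle here; the only point requiring a word of care is the restriction to $\theta$ in the common domain of definition, i.e.\ avoiding the zeros of $\sin(\theta+\tfrac{k-1}{n}\pi)$ for $k=1,\dots,n$ (equivalently, the zeros of $\sin(n\theta)$), so that all cotangents and the differentiation step are valid. Alternatively, one may obtain the same identities by a partial-fraction decomposition of $n z^{n-1}/(z^{n}-1)$ evaluated at $z = e^{2i\theta}$ and matching real parts, but the logarithmic-derivative route is the shortest.
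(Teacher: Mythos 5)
Your proof is correct and follows essentially the same route as the paper: both derive the product formula $\sin(n\theta)=2^{n-1}\prod_{k=1}^n\sin(\theta+\tfrac{k-1}{n}\pi)$ from the factorization of $z^n-1$, obtain \eqref{cot1} by taking the logarithmic derivative, and obtain \eqref{cot2} by differentiating \eqref{cot1} once more. The paper merely spells out the factorization step (substituting $z=e^{-2i\theta}$ into $|z^n-1|^2$, following Milnor) and leaves the second differentiation as ``follows easily,'' which you have filled in explicitly.
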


\begin{proof}
The proof is based on Milnor's paper \cite{Mil82}.
Substituting $z=e^{-2i\theta}$ into the equation 
$$|z^n-1|^2=\prod\limits_{k=1}^n|z-e^{-2i\frac{k-1}{n}\pi}|^2,$$
we obtain that
$$2^2\sin^2n\theta=2^{2n}\prod\limits_{k=1}^n\sin^2\left(\theta+\frac{k-1}{n}\pi\right).$$
Thus for $\theta\in(0,\frac{1}{n}\pi)$,  we get the trigonometric identity
\begin{equation}\label{sin}
\prod\limits_{k=1}^n\sin\left(\theta+\frac{k-1}{n}\pi\right)=2^{1-n}\sin n\theta.
\end{equation}
Then the general case follows by analytic continuation.

Taking derivatives on both sides of (\ref{sin}) and dividing the derivatives by two sides of (\ref{sin}) seperately, we obtain (\ref{cot1}).
Then (\ref{cot2}) follows easily.

\end{proof}

Now we give a proof of Proposition \ref{TY3}.
\begin{proof}
According to the fundamental result of M\"unzner, for an isoparametric hypersurface $M^n$ in $S^{n+1}(1)$, its principal curvatures could be written as $$\cot\theta, \cot\left(\theta+\frac{\pi}{g}\right),\cdots,\cot\left(\theta+\frac{g-1}{g}\pi\right)$$
with multiplicities $m_1, m_2,\cdots, m_g$ and $m_k=m_{k+2}$ with subscripts mod $g$.
Thus the mean curvature of $M^n$ is 
$$H=\sum_{i=1}^gm_i\cot\left(\theta+\frac{i-1}{g}\pi\right)$$
and the squared norm of the second fundamental form is 
$$S=\sum_{i=1}^gm_i\cot^2\left(\theta+\frac{i-1}{g}\pi\right).$$

If all the multiplicities are equal, denote $m_1=m_2=\cdots=m_g=m$, then $n=mg$ and the scalar curvature is (by using Lemma 2.1)
\begin{eqnarray*}
R_M&=&n(n-1)+H^2-S\\
&=& n(n-1)+\left(m\sum_{i=1}^g\cot(\theta+\frac{i-1}{g}\pi)\right)^2-m\sum_{i=1}^g\cot^2\left(\theta+\frac{i-1}{g}\pi\right)\\
&=& n(n-1)+m^2g^2\cot^2g\theta-mg^2\cot^2g\theta-mg^2+mg\\
&=&n(n-g)\left(1+\cot^2g\theta\right)\\
&\geq&0,
\end{eqnarray*}
and the ``=" holds if and only if $g=n$.

If all of the multiplicities are not equal, then $g$ is even and $m_1=m_3=\cdots=m_{g-1}$, $m_2=m_4=\cdots=m_g$.
Notice that now we get $n=\frac{g(m_1+m_2)}{2}$ and
\begin{eqnarray*}
H&=&m_1\sum_{i=1}^{\frac{g}{2}}\cot\left(\theta+\frac{2(i-1)}{g}\pi\right)+m_2\sum_{i=1}^{\frac{g}{2}}\cot\left(\theta+\frac{\pi}{g}+\frac{2(i-1)}{g}\pi\right)\\
&=&\frac{g}{2}m_1\cot\frac{g}{2}\theta+\frac{g}{2}m_2\cot\frac{g}{2}\left(\theta+\frac{\pi}{g}\right)\\
&=&\frac{g}{2}\left(m_1t-\frac{m_2}{t}\right),
\end{eqnarray*}
where in the last equality we use the notation $t:=\cot\frac{g}{2}\theta$ for convenience. 
\begin{eqnarray*}
S&=&m_1\sum_{i=1}^{\frac{g}{2}}\cot^2\left(\theta+\frac{2(i-1)}{g}\pi\right)+m_2\sum_{i=1}^{\frac{g}{2}}\cot^2\left(\theta+\frac{\pi}{g}+\frac{2(i-1)}{g}\pi\right)\\
&=&m_1\left((\frac{g}{2})^2\cot^2\frac{g}{2}\theta+(\frac{g}{2})^2-\frac{g}{2}\right)+m_2\left((\frac{g}{2})^2\cot^2\frac{g}{2}(\theta+\frac{\pi}{g})+(\frac{g}{2})^2-\frac{g}{2}\right) \\
&=&\frac{g^2}{4}\left(m_1t^2+\frac{m_2}{t^2}\right)+n\left(\frac{g}{2}-1\right).
\end{eqnarray*}
Thus the scalar curvature is
\begin{eqnarray*}
R_M&=&n(n-1)+H^2-S\\
&=& n(n-1)+\frac{g^2}{4}\left(m_1t-\frac{m_2}{t}\right)^2-\frac{g^2}{4}\left(m_1t^2+\frac{m_2}{t^2}\right)-n\left(\frac{g}{2}-1\right)\\
&=& \frac{g^2}{4}(m_1+m_2)(m_1+m_2-1)+\frac{g^2}{4}\left(m_1(m_1-1)t^2+m_2(m_2-1)\frac{1}{t^2}-2m_2m_2\right)\\
&=&\frac{g^2}{4}\left(m_1(m_1-1)(1+t^2)+m_2(m_2-1)(1+\frac{1}{t^2})\right)\\
&\geq&0,
\end{eqnarray*}
and the ``=" holds if and only if $m_1=m_2=1$.
\end{proof}

\section{\textbf{Preliminaries for the proof of Theorem \ref{TY1}}}

 From now on, we assume that $M^n$ is connected and oriented. Otherwise, we can discuss on each connected component of $M^n$ or on the double covering of $M^n$.
 
 \subsection{Notations.}\quad
For convenience, we first make some notations. Let us denote by $\lambda_1(p)\leq\lambda_2(p)\leq\cdots\leq\lambda_n(p)$ the eigenvalues of $\mathcal{A}(p)$ for each $p\in M^n$. Note that $\lambda_i$ is continuous for each $i=1,\cdots,n.$  Rewrite condition $(1.3)$ as
 \begin{equation}\label{eqns}
\left\{ \begin{array}{llll}
\lambda_1+\cdots+\lambda_n=c_1\\
\lambda_1^2+\cdots+\lambda_n^2=c_2\\
\qquad\cdots\cdots\\
\lambda_1^{n-1}+\cdots+\lambda_n^{n-1}=c_{n-1},
\end{array}\right.
\end{equation}
where $c_1,\cdots,c_{n-1}$ are constants and define a function on $M^n$,
\begin{equation}\label{f}
f:=f(\lambda_1(p),\cdots,\lambda_n(p)):=\lambda_1^{n}+\cdots+\lambda_n^{n}.
\end{equation}
Notice that $f$ is a smooth function on $M^n$. Denote $\lambda=(\lambda_1,\lambda_2,\cdots,\lambda_n)$, sometimes we will just regard $f$ as a function of $\lambda$: $f=f(\lambda)$. It is obvious that $f$ is constant if and only if $\lambda_1,\cdots,\lambda_n$ are constants.

The following characteristic polynomial of $\mathcal{A}$ is important in our discussion:
\begin{equation}\label{F}
F(x)=\prod\limits_{i=1}^n(x-\lambda_i)=x^n-d_1x^{n-1}+d_2x^{n-2}-\cdots+(-1)^{n-1}d_{n-1}x+(-1)^nd_n,
\end{equation}
where
 \begin{equation}\label{di}
\left\{ \begin{array}{llll}
d_1=\lambda_1+\cdots+\lambda_n=\sum\limits_{i=1}^n\lambda_i\\
d_2=\lambda_1\lambda_2+\cdots+\lambda_{n-1}\lambda_n=\sum\limits_{i,j=1;\, i<j}^n\lambda_i\lambda_j\\
\qquad\cdots\cdots\\
d_{n-1}=\sum\limits_{i_1,\cdots, i_{n-1}=1;\, i_1<\cdots<i_{n-1}}^n\lambda_{i_1}\cdots\lambda_{i_{n-1}}\\
d_n=\lambda_1\cdots\lambda_n
\end{array}\right.
\end{equation}
are the elementary symmetric polynomial of $\lambda_1,\cdots,\lambda_n$.

By Newton's formula, $d_1,\cdots,d_{n-1}$ are determined uniquely by $c_1,\cdots,c_{n-1}$, thus are constants. Moreover,
\begin{equation}\label{dn}
d_n=d_n(f)=\frac{(-1)^{n-1}}{n}f+C,
\end{equation}
where $C$ is a constant depending only on $c_1,\cdots,c_{n-1}$.

We set
\begin{equation}\label{F0}
F_0(x)=x^n-d_1x^{n-1}+d_2x^{n-2}-\cdots+(-1)^{n-1}d_{n-1}x.
\end{equation}
Clearly, $F_0(x)$ is a polynomial of degree $n$ with coefficients depending on $c_1,\cdots,c_{n-1}$ and independent of $f$. Moreover, combining with (\ref{dn}), we have
\begin{equation}\label{F F0}
F(x)=F_0(x)-\frac{1}{n}f+(-1)^nC.
\end{equation}
\vspace{2mm}

\subsection{Discussion on $\Omega$.}\quad
Define a domain of $M^n$
\begin{eqnarray}
\Omega:=\big\{~~p\in M^n~|&&\sum_{i=1}^n\lambda_i^j(p)=c_j,~~ \forall ~j=1,\cdots,n-1,~\label{omega}\\
&& \textup{and} ~\lambda_1(p)<\lambda_2(p)<\cdots<\lambda_n(p)~\big\}.\nonumber
\end{eqnarray}
Then we will prove Theorem \ref{TY1} in the following two cases:

\vspace{2mm}
\subsubsection{}
\noindent
\textbf{Case 1: $\Omega=\varnothing$.}\quad
Rewrite $(\lambda_1,\cdots,\lambda_n)$ as 
$$(\lambda_1,\cdots,\lambda_n)=(\mu_1,\cdots,\mu_1,\cdots,\mu_g,\cdots,\mu_g),$$
with $\mu_1<\mu_2<\cdots<\mu_g$ of multiplicities $m_1, m_2,\cdots, m_g$, and $\displaystyle\sum_{k=1}^gm_k=n$. 
In this case, $g<n$, or equivalently, there is some $k\in\{1,\cdots,g\}$ with $m_k\geq 2$.

We will deal with this case by the following simple lemma which is totally in linear algebra:
\begin{lem}\label{mk 2}
There is at most one solution $(\mu_1,\cdots,\mu_g)$ to the equations 
 \begin{equation}\label{eqns}
\left\{ \begin{array}{llll}
m_1\mu_1+\cdots+m_g\mu_g=c_1\\
m_1\mu_1^2+\cdots+m_g\mu_g^2=c_2\\
\qquad\cdots\cdots\\
m_1\mu_1^{n-1}+\cdots+m_g\mu_g^{n-1}=c_{n-1}
\end{array}\right.
\end{equation}
with some $m_k\geq 2.$
\end{lem}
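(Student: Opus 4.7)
The plan is to argue by contradiction. Suppose two distinct ordered tuples $(\mu_1<\cdots<\mu_g)$ and $(\nu_1<\cdots<\nu_g)$ both solve the system with the same multiplicities $(m_1,\ldots,m_g)$ having some $m_k\ge 2$. Form the monic degree-$n$ polynomials $P(x)=\prod_i(x-\mu_i)^{m_i}$ and $Q(x)=\prod_i(x-\nu_i)^{m_i}$.

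First I would invoke Newton's identities: the $n-1$ equalities $\sum_i m_i\mu_i^k=\sum_i m_i\nu_i^k=c_k$ for $k=1,\ldots,n-1$ are equivalent to the equality of the first $n-1$ elementary symmetric functions of the two root multisets, which is equivalent to $P(x)$ and $Q(x)$ agreeing in every coefficient except possibly the constant term. So $P(x)-Q(x)=c$ for some $c\in\mathbb{R}$. If $c=0$, then $P=Q$; comparing ordered factorizations (since the ordering and the multiplicities $(m_1,\ldots,m_g)$ uniquely recover the tuple) forces $\mu_i=\nu_i$ for every $i$, contradicting distinctness.

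Next assume $c\ne 0$. Then $\gcd(P,Q)=\gcd(P,P-Q)=\gcd(P,c)=1$, so $\{\mu_i\}\cap\{\nu_j\}=\varnothing$. Differentiating gives $P'=Q'$. For each $j$ with $m_j\ge 2$, $\mu_j$ is a root of $P'$ of multiplicity $m_j-1$; since $\mu_j$ is not any $\nu_i$, it has to occur in the factorization of $Q'=P'$ as a critical point of $Q$ lying strictly between two consecutive $\nu_i$'s. A Rolle-plus-degree count ($\deg Q'=n-1$, of which $\sum_{m_i\ge 2}(m_i-1)=n-g$ is contributed by the $\nu_i$'s themselves, leaving exactly $g-1$ ``interior'' zeros distributed one per gap by Rolle, and hence each simple) forces $m_j-1\le 1$, so $m_j=2$. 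Hence each $m_k\in\{1,2\}$, and with $s:=\#\{k:m_k=2\}\ge 1$ we have $n=g+s$.

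The expected main obstacle is the remaining step: under these refined conditions with $c\ne 0$, the $s$ double roots of $Q$ coincide with $s$ distinguished interior critical points of $P$ all hosting the common value $P=c$, and symmetrically for $P$ and $Q$. I plan to exploit the fixed sign pattern of $P$ on each interval $(\mu_k,\mu_{k+1})$, whose sign is $(-1)^{\sum_{j>k}m_j}$, to restrict which interior critical points of $P$ may carry value $c$ (only those in intervals whose sign agrees with $\operatorname{sgn}(c)$). Interleaving these forced positions with the simple zeros of $Q=P-c$ arising from transverse crossings of the horizontal line $y=c$ in the remaining intervals, and comparing the resulting ordered multiplicity sequence of $Q$ against the prescribed pattern $(m_1,\ldots,m_g)$, I expect to force a mismatch: for instance in the indicative cases $(1,2,1)$ and $(2,1,2)$, the forced double root of $Q$ is pushed to the extreme end of a simple-root cluster, or the required supply of same-sign critical values is insufficient. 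Extending this combinatorial contradiction to an arbitrary pattern with some $m_k\ge 2$ will complete the proof.
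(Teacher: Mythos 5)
Your opening moves are sound and essentially parallel the paper: you form the monic characteristic polynomials, use Newton's identities to conclude they can differ only in their constant term ($P-Q=c$), dispose of $c=0$, and in the case $c\neq 0$ note $\gcd(P,Q)=1$ and $P'=Q'$. Your degree-count argument that every repeated multiplicity must then equal exactly $2$ (a $\mu_j$ with $m_j\geq 2$ is a root of $Q'$ of multiplicity $m_j-1$, but avoids the $\nu_i$, hence is one of the $g-1$ simple Rolle roots) is also correct.

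But the last step is a genuine gap, and it is the entire content of the lemma. You explicitly concede that the ``combinatorial mismatch'' is only expected and verified on two small patterns; you have not shown it in general, and it is far from clear that a case analysis on sign patterns of $P$ over its root intervals closes cleanly. In fact the whole detour to $m_j\in\{1,2\}$ is unnecessary, and a much shorter argument is available from exactly the point you reached. Take $k$ minimal with $m_k\geq 2$, so $m_1=\cdots=m_{k-1}=1$. Then the $k-1$ roots of $P'$ lying below $\mu_k$ are precisely the simple Rolle critical points $\tau_1<\cdots<\tau_{k-1}$ in the gaps $(\mu_i,\mu_{i+1})$, $i<k$, and $\mu_k$ is therefore the $k$-th smallest root of $P'$. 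The same count applied to $Q$ shows $\nu_k$ is the $k$-th smallest root of $Q'$. Since $P'=Q'$, this forces $\mu_k=\nu_k$, so $P$ and $Q$ share the root $\mu_k$, contradicting $\gcd(P,Q)=1$ (equivalently, $c=P(\mu_k)-Q(\mu_k)=0$). This is exactly the paper's argument phrased contrapositively: $F'=F_0'$ depends only on $c_1,\ldots,c_{n-1}$, so the $k$-th root $\mu_k$ of $F'$ is determined, and then $F(\mu_k)=0$ pins down the one free coefficient $d_n$. You had all the ingredients; the missing idea is to use the \emph{ordinal position} of the first repeated root inside $P'=Q'$, rather than the multiplicity pattern.
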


\begin{proof}
For convenience, rewrite the characteristic polynomial $F(x)$ in (\ref{F}) as
$$F(x)=\prod\limits_{i=1}^g(x-\mu_i)^{m_i}.$$
Notice that $F(\mu_1)=F(\mu_2)=\cdots=F(\mu_g)=0$. By Rolle's theorem, there exist $\tau_1,\cdots,\tau_{g-1}$ with 
$\mu_1<\tau_1<\mu_2<\tau_2<\mu_3<\cdots<\tau_{g-1}<\mu_g$ such that
$F'(\tau_1)=F'(\tau_2)=\cdots=F'(\tau_{g-1})=0$. Furthermore, noticing that $\mu_1<\tau_1<\mu_2<\tau_2<\mu_3<\cdots<\tau_{g-1}<\mu_g$ are all the possible roots of $F'(x)$, we see easily
$$F'(x)=n(x-\mu_1)^{m_1-1}(x-\tau_1)(x-\mu_2)^{m_2-1}(x-\tau_2)\cdots(x-\mu_g)^{m_g-1}.$$

Let $k$ be the positive number such that the multiplicity $m_k\geq 2$ and $m_i=1$ for $i\leq k-1$. Clearly, $\mu_k$ is the $k$-th root of $F'(x)$ and is uniquely determined by $F'(x)=F_0'(x)$, which is independent of $f$.

On the other hand, from 
$$F(\mu_k)=F_0(\mu_k)+(-1)^n d_n=0,$$
it follows that $d_n=(-1)^{n-1}F_0(\mu_k)$ is also uniquely determined.

Therefore, the polynomial $F(x)$ is uniquely determined, and thus $\mu_1,\cdots,\mu_g$ the real roots of $F(x)$.
\end{proof}

Given a sequence $m_1,\cdots,m_g$ with some $m_k\geq 2$, it follows from Lemma \ref{mk 2} that $f=(-1)^{n-1}n(d_n-C)$ is a uniquely determined constant function.
Since $m_i$ $(i=1,\cdots,g)$ are positive integers and $\sum_{i=1}^gm_i=n$, we have only finite cases with some $m_k\geq 2$, and in each case, $f$ is a uniquely determined constant. Thus the set of possible values of $f$ is a discrete set. However, as we mentioned before, $f$ is a smooth function on $M^n$. Therefore, $f$ must be constant on $M^n$, and thus the eigenvalues $\lambda_1,\cdots,\lambda_n$ of $\mathcal{A}$.

\vspace{3mm}

\subsubsection{}
\noindent
\textbf{Case 2: $\Omega\neq \varnothing$.}\quad At the first glance, $f$ is a smooth function on closed $M^n$, thus the range of function $f$ is $\textup{Im} f=[a_0, b_0]$ $(a_0\leq b_0)$.
Our first task in this case is to investigate $\textup{Im} f$. 

Since  $\Omega\neq \varnothing$, it is directly seen that the polynomial $F(x)$ defined in 
(\ref{F}) has $n$ distinct real roots on $\Omega$. Equivalently, the equation 
\begin{equation}\label{F F0 f}
F(x)=\prod\limits_{i=1}^n(x-\lambda_i)=F_0(x)-\frac{1}{n}f+(-1)^nC=0
\end{equation}
has $n$ distinct roots $\lambda_1<\cdots<\lambda_n$ on $\Omega$. To determine $\textup{Im} f$, we start from examining the polynomial $F_0(x)$ which is independent of $f$.

Notice that 
$$F(\lambda_1)=F(\lambda_2)=\cdots=F(\lambda_n)=0.$$
By Rolle's theorem, we can find $\tau_1,\cdots,\tau_{n-1}$ with $\lambda_1<\tau_1<\lambda_2<\tau_2<\cdots<\tau_{n-1}<\lambda_n$ such that
$$F_0'(\tau_i)=F'(\tau_i)=0,\quad\forall ~~i=1,\cdots,n-1.$$
Thus for the polynomial function $F_0(x)$ of degree $n$, $\tau_1,\cdots,\tau_{n-1}$ are all the extreme points of $F_0(x)$. We define $b'$ to be the minimum of all the local maximal values of $F_0(x)$, and $a'$ to be the maximum of all the local minimum values. To be more precise, when $n$ is odd, 
\begin{equation}\label{b' for n odd}
b':=\textup{min}\{F_0(\tau_1), F_0(\tau_3),\cdots, F_0(\tau_{n-2})\},
\end{equation}
and
\begin{equation}\label{a'}
a':=\textup{max}\{F_0(\tau_2), F_0(\tau_4),\cdots,F_0(\tau_{n-1})\}.
\end{equation}
For example, we give a figure of polynomial function $F_0(x)$ with degree $5$ as follows:
\begin{figure}[h]
\centering
\includegraphics[width=12cm,height=6cm]{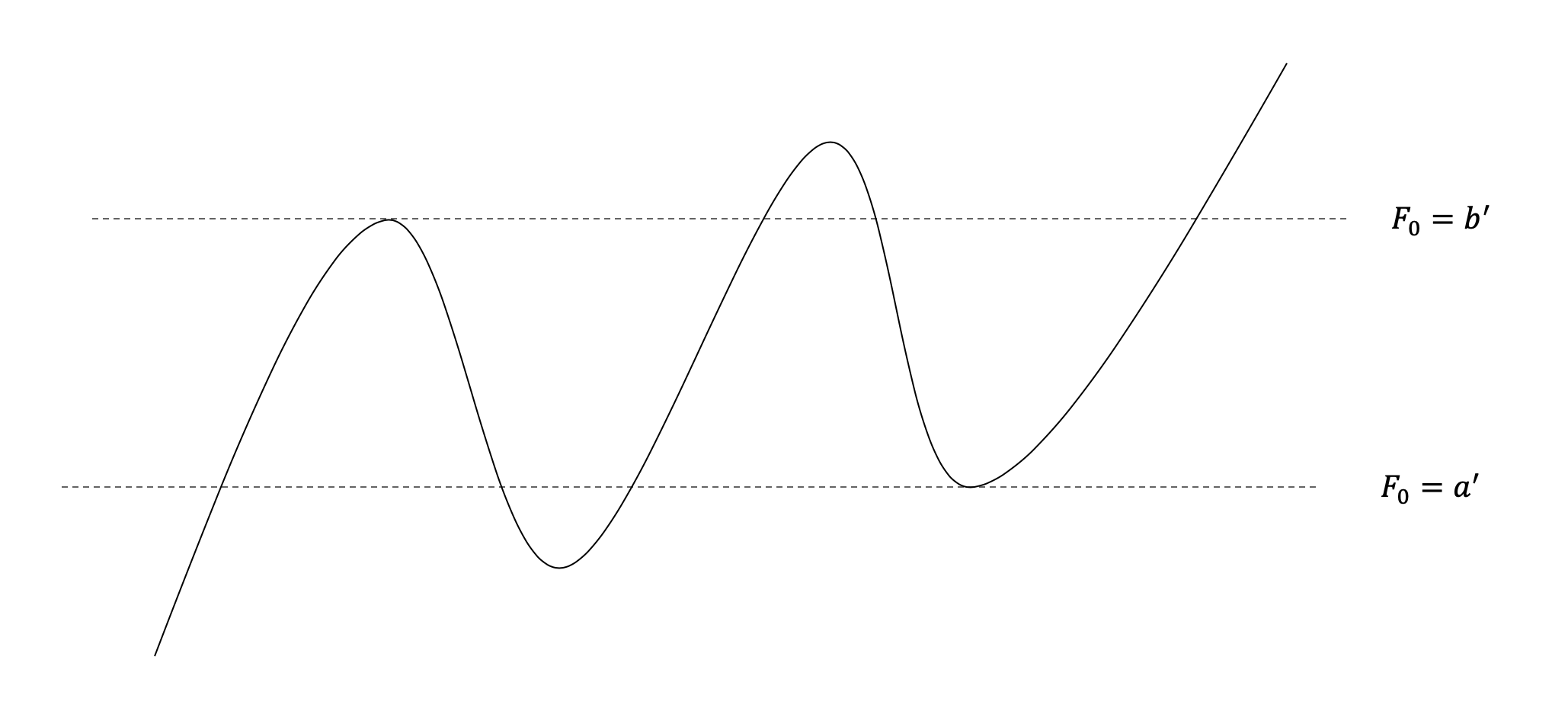}
\caption{}
\end{figure}
\newpage
When $n$ is even,
\begin{equation}\label{b' for n even}
b':=\textup{min}\{F_0(\tau_2), F_0(\tau_4),\cdots,F_0(\tau_{n-2})\},
\end{equation}
and 
\begin{equation}\label{a' for n even}
a':=\textup{max}\{F_0(\tau_1), F_0(\tau_3),\cdots, F_0(\tau_{n-1})\},
\end{equation}
For example, we give a figure of polynomial function $F_0(x)$ with degree $4$ as follows:
\begin{figure}[h]
\centering
\includegraphics[width=10cm,height=5cm]{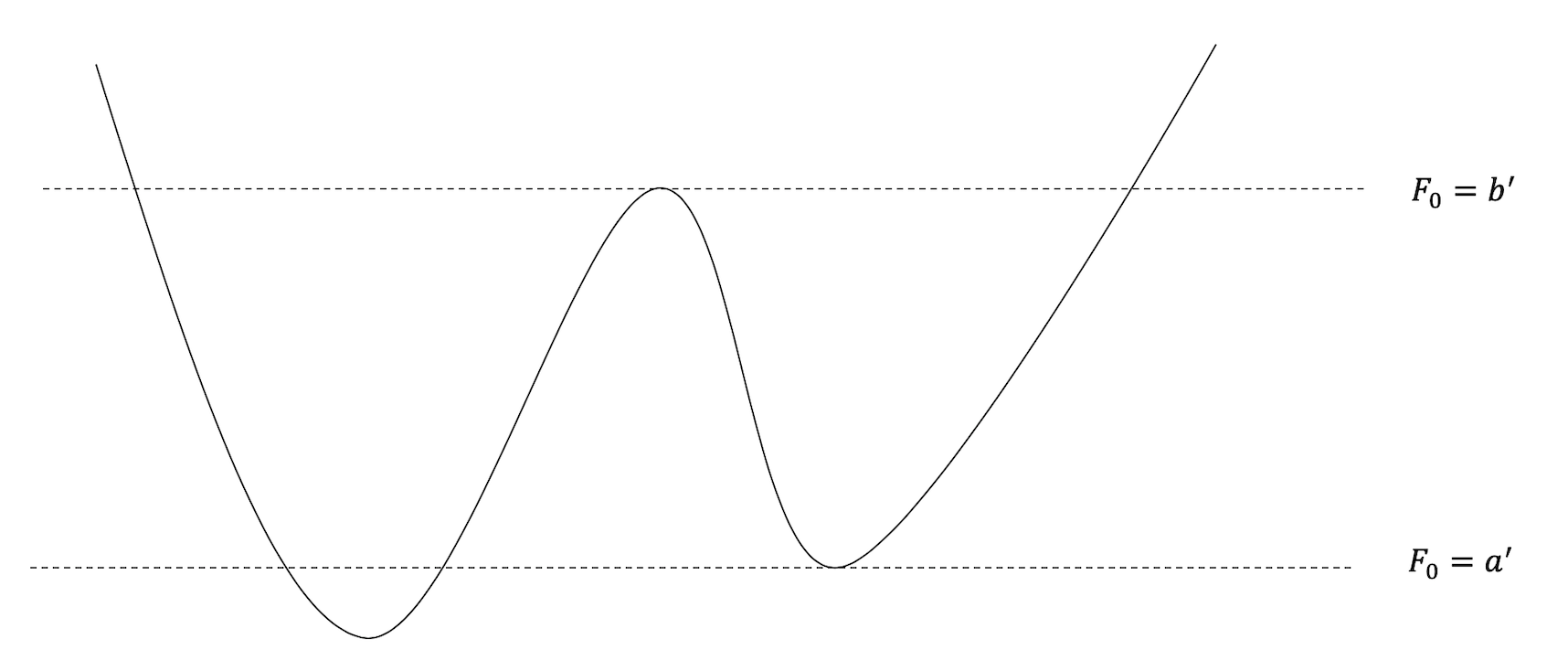}
\caption{}
\end{figure}

Recall the facts we mentioned above, that is, the equation (\ref{F F0 f}) has $n$ distinct roots on $\Omega$, it follows directly that
$$b'>a'.$$
Moreover, Observe that for any $\xi\in[a_0, b_0]$, $\xi$ could be expressed as the $n$-th power sum of the roots of equations (\ref{eqns}), that is, the equation
$$F_0(x)-\frac{1}{n}\xi+(-1)^nC=0$$
has $n$ real roots. Therefore, defining
\begin{equation}\label{ab}
b:=n\left( ~b'+(-1)^nC~\right),\quad a:=n\left( ~a'+(-1)^nC~\right),
\end{equation}
we obtain immediately that
\begin{equation*}\label{bb'}
\textup{Im} f=[a_0, b_0]\subset [a, b].
\end{equation*}
So we need to consider four cases: (1) $a_0>a$, $b_0<b$; (2) $a_0=a$, $b_0<b$;
(3) $a_0>a$,  $b_0=b$; (4) $a_0=a$, $b_0=b$.

For the case $(1)$, that is, all the eigenvalues of $\mathcal{A}$ are distinct on $M^n$, it is already completed by \cite{TWY20}. In the following, it is sufficient for us to deal with the case (4), the other cases are verbatim.

From now on, we assume that $\textup{Im} f=[a, b]$ with $a<b$, and $a, b$ are achieved on the points where $F_0(x)$ achieves the maximum of its local minimum values and the minimum of its local maximum values, as we illustrated in (\ref{b' for n odd})-(\ref{a' for n even}) and (\ref{ab}).

Using the same notations with those in \cite{dB90}, we define 
\begin{eqnarray}\label{XYZ}
X&:=& \{p\in M^n:~~f(p)=a\}=f^{-1}(a)\nonumber\\
Y&:=&\{p\in M^n:~~a<f(p)<b\}\\
Z&:=&\{p\in M^n:~~f(p)=b\}=f^{-1}(b) \nonumber
\end{eqnarray}
Obviously,
$$M^n=X\cup Y\cup Z.$$
If $Y=\varnothing$, then $f=a$ or $b$, and Theorem \ref{TY1} follows from the continuity of $f$. 

From now on, we will assume $Y\neq\varnothing$. 

From the discussion on $\textup{Im} f=[a_0, b_0]$ as above and the assumption $[a_0, b_0]=[a, b]$, we derive the following geometric illustration of $Y$ directly:

\begin{lem}\label{Y}
Let $(\lambda_1,\cdots,\lambda_n)$ be a solution to the equations (\ref{eqns}). Suppose $\Omega\neq\varnothing$ and $\textup{Im} f=[a, b]$.
Then 
$$Y\subset \Omega.$$\end{lem}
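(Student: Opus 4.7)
The plan is to read off Lemma \ref{Y} directly from the definitions of $a'$, $b'$, $a$, $b$, and the relation $F(x)=F_0(x)-\tfrac{1}{n}f+(-1)^n C$ in (\ref{F F0 f}). The key observation is that at any $p\in M^n$, the eigenvalues $\lambda_1(p)\leq\cdots\leq\lambda_n(p)$ are precisely the real roots of $F(x)$, which equivalently are the $x$-values where the graph of the fixed polynomial $F_0$ meets the horizontal line
\[
y=\eta(p):=\tfrac{1}{n}f(p)-(-1)^n C.
\]
So counting distinct eigenvalues at $p$ is the same as counting transverse intersections of this line with the graph of $F_0$.

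First, I would translate the hypothesis $p\in Y$ into a statement about $\eta(p)$. Since $a=n(a'+(-1)^n C)$ and $b=n(b'+(-1)^n C)$, the inequality $a<f(p)<b$ is equivalent to $a'<\eta(p)<b'$. Recall that $a'$ is the largest of the local minimum values of $F_0$, while $b'$ is the smallest of its local maximum values, so $\eta(p)$ lies strictly below every local maximum value and strictly above every local minimum value of $F_0$.

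Next I would use the critical points $\tau_1<\cdots<\tau_{n-1}$ of $F_0$ (already introduced in the excerpt) to partition $\mathbb{R}$ into $n$ open intervals $(-\infty,\tau_1),(\tau_1,\tau_2),\ldots,(\tau_{n-1},+\infty)$ on each of which $F_0$ is strictly monotonic. The values $F_0(\tau_i)$ alternate between local maxima ($\geq b'$) and local minima ($\leq a'$), and $F_0(x)\to\pm\infty$ at the two ends (depending on the parity of $n$). Because $\eta(p)$ is strictly between $a'$ and $b'$, the Intermediate Value Theorem combined with strict monotonicity on each interval gives exactly one solution of $F_0(x)=\eta(p)$ in each of these $n$ intervals, and the endpoints $\tau_i$ are not themselves roots. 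Hence $F(x)$ has $n$ distinct real roots at $p$, which means $\lambda_1(p)<\lambda_2(p)<\cdots<\lambda_n(p)$ and $p\in\Omega$.

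The argument is essentially a one-variable calculus/picture argument (cf.\ Figures 1 and 2), so there is no serious obstacle; the only point that deserves care is the strictness of all inequalities. In particular one has to be sure that $\eta(p)$ cannot coincide with any critical value of $F_0$, which is exactly why the conditions in (\ref{b' for n odd})--(\ref{a' for n even}) are taken as strict min/max over \emph{all} local extrema rather than just the global ones; this is also the reason we used the hypothesis $a<f(p)<b$ (open interval) rather than a closed one. Once this is observed, the lemma follows, and the conclusion $Y\subset\Omega$ is exactly what is needed in Section 4 to do the analysis of Theorem \ref{TY1} on $Y$ under the simplifying assumption that all eigenvalues of $\mathcal{A}$ are simple there.
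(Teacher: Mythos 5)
Your proof is correct and takes essentially the same approach the paper has in mind: the paper states that the lemma follows ``directly'' from the preceding discussion of $F_0$, its critical points $\tau_1,\ldots,\tau_{n-1}$, and the definitions of $a',b',a,b$, and you have simply made that implicit one-variable argument explicit. The key points you correctly supply are that $F_0$ is a fixed polynomial (so the $\tau_i$ do not depend on $p$), that $a<f(p)<b$ translates to $a'<\tfrac{1}{n}f(p)-(-1)^nC<b'$ so the horizontal line strictly separates all local maxima of $F_0$ from all local minima, and that strict monotonicity of $F_0$ on each of the $n$ intervals cut out by the $\tau_i$ then yields exactly $n$ distinct real roots of $F(x)=0$, i.e.\ $p\in\Omega$.
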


For $0<\varepsilon<\frac{b-a}{2}$, we let (similar to that in \cite{dB90})
\begin{eqnarray}\label{XYZvarepsilon}
X_{\varepsilon}&:=& \{p\in M^n:~~a<f(p)<a+\varepsilon\}\nonumber
\\
Y_{\varepsilon}&:=& \{p\in M^n:~~a+\varepsilon \leq f(p)\leq b-\varepsilon\}\\
Z_{\varepsilon}&:=& \{p\in M^n:~~b-\varepsilon<f(p)<b\} \nonumber
\end{eqnarray}
and infer from (\ref{XYZ}) that
$$Y=X_{\varepsilon}\cup Y_{\varepsilon}\cup Z_{\varepsilon}.$$
\vspace{4mm}

\section{\textbf{Proof of Theorem \ref{TY1}}}
In this section, we continue to deal with the Case 2 in Subsection 3.2.2 under the assumption that $M^n$ is closed, connected and oriented, $\Omega\neq\varnothing$ and $\textup{Im} f=[a, b]$ $(a<b)$.

\subsection{Structure equations on $Y$.}\quad

Firstly, we will take a look at the structure equations on the open set $Y$ of $M^n$.
Actually, we are going to repeat some definitions and calculations of \cite{TWY20} in this subsection.

Locally, we choose an oriented orthonormal frame fields $\{e_i, ~i=1,\ldots,n\}$ on $M^n$. Let $\{\theta_i, ~i=1,\ldots,n\}$ be the dual frame. Then one has the structure equations:
\begin{equation*}\label{str eqn}
\left\{ \begin{array}{ll}
d\theta_i=\sum\limits_{j=1}^n\omega_{ij}\wedge\theta_j\\
d\omega_{ij}=\sum\limits_{k=1}^n\omega_{ik}\wedge\omega_{kj}-R_{ij},
\end{array}\right.
\end{equation*}
where $\omega_{ij}$ is the connection form 
and $R_{ij}=\frac{1}{2}\sum\limits_{k,l=1}^nR_{ijkl}\theta_k\wedge\theta_l$ is the curvature form.

Let $\mathfrak{a}$ be a smooth symmetric $(0,2)$ tensor, which can be denoted by $\mathfrak{a}=\sum\limits_{i,j=1}^n\mathfrak{a}_{ij}\theta_i\otimes\theta_j$, where $\mathfrak{a}_{ij}=\mathfrak{a}(e_i, e_j)$ is smooth and $\mathfrak{a}_{ij}=\mathfrak{a}_{ji}$. Then the covariant derivative of $\mathfrak{a}$ can be written by
\begin{equation*}
  \nabla \mathfrak{a}=\sum_{i,j,k=1}^n \mathfrak{a}_{ijk}\theta_i\otimes\theta_j\otimes\theta_k,
\end{equation*}
where \begin{equation}\label{2}
\sum_{k=1}^n\mathfrak{a}_{ijk}\theta_k=d\mathfrak{a}_{ij}+\sum_{m=1}^n(\mathfrak{a}_{im}\omega_{mj}+\mathfrak{a}_{mj}\omega_{mi}).
\end{equation}
In addition, according to the assumption that the tensor $\mathfrak{a}$ is Codazzian, that is, $(\nabla_{e_k} \mathfrak{a})(e_i, e_j)=(\nabla_{e_i} \mathfrak{a})(e_k, e_j)$ for any $i,j,k=1,\ldots,n$.
It implies immediately that $\mathfrak{a}_{ijk}$ is symmetric, and so is $\nabla \mathfrak{a}$. 

Next, we choose a proper coordinate system on $Y$ such that $(U, (\theta_1,\ldots,\theta_n))$ is admissible (\cite{dB90}). Namely, $(U, (\theta_1,\ldots,\theta_n))$ satisfies
\begin{itemize}
  \item $(\theta_1,\ldots,\theta_n)$ is a smooth orthonormal coframe field on an open subset $U$ of $Y$;
  \item $\theta_1\wedge\ldots\wedge\theta_n=\texttt{vol}$, the volume form on $U$;
  \item $\mathfrak{a}=\sum\limits_{i=1}^n\lambda_i\theta_i\otimes\theta_i$.
\end{itemize}
Evidently, when $(U, (\theta_1,\ldots,\theta_n))$ is admissible, the connection forms $\omega_{ij}$ on U are uniquely determined and $\mathfrak{a}_{ij}=\lambda_i\delta_{ij}$.

On the other hand, it follows from Lemma \ref{Y} that $Y\subset\Omega$. Thus each $\lambda_i$ ($i=1,\ldots,n$) is 
smooth on $Y$.
We differentiate it to get the smooth $1$-form $d\lambda_i$, which can be expressed by the metric form $\theta_k$ as
\begin{equation*}\label{5}
d\lambda_i=\sum\limits_{j=1}^n\lambda_{ij}\theta_j,
\end{equation*}
where $\lambda_{ij}$ are smooth functions on $M^n$.
Besides, express the connection form $\omega_{ij}$ as
\begin{equation}\label{beta1}
\omega_{ij}:=\sum\limits_{k=1}^n\beta_{ijk}\theta_k
\end{equation}
where $\beta_{ijk}=\omega_{ij}(e_k)$.
Then it follows from equation (\ref{2}) immediately that
\begin{equation*}\label{3}
\sum_{k=1}^n\mathfrak{a}_{iik}\theta_k = d\lambda_i=\sum\limits_{k=1}^n\lambda_{ik}\theta_k, ~~\forall~i=1,\ldots,n
\end{equation*}
\begin{equation*}\label{4}
\sum_{k=1}^n \mathfrak{a}_{ijk}\theta_k = (\lambda_i-\lambda_j)\omega_{ij}=(\lambda_i-\lambda_j)\sum\limits_{k=1}^n\beta_{ijk}\theta_k, ~~\forall~ i\neq j.
\end{equation*}
Equivalently,
\begin{eqnarray}
a_{iik} &=& \lambda_{ik} \label{6} \\
a_{ijk} &=& (\lambda_i-\lambda_j)\beta_{ijk}, ~~\forall~ i\neq j. \label{7}
\end{eqnarray}

Differentiating the equations
\begin{equation}\label{9}
\left\{ \begin{array}{llll}
\lambda_1+\cdots+\lambda_n=c_1\\
\lambda_1^2+\cdots+\lambda_n^2=c_2\\
\qquad\ldots\cdots\\
\lambda_1^{n-1}+\cdots+\lambda_n^{n-1}=c_{n-1}\\
\lambda_1^{n}+\cdots+\lambda_n^{n}=f,
\end{array}\right.
\end{equation}
we obtain for each $j=1,\ldots,n$,
\begin{equation}\label{10}
\begin{pmatrix}
1 & 1 & \cdots & 1\\
\lambda_1 & \lambda_2 &\cdots & \lambda_n\\
\vdots & \vdots &\ddots &\vdots\\
\lambda_1^{n-2} & \lambda_2^{n-2} &\cdots & \lambda_n^{n-2}\\
\lambda_1^{n-1} & \lambda_2^{n-1} &\cdots & \lambda_n^{n-1}
\end{pmatrix}
\begin{pmatrix}
\lambda_{1j}\\
\lambda_{2j}\\
\vdots\\
\lambda_{n-1, j}\\
\lambda_{nj}
\end{pmatrix}
=
\begin{pmatrix}
0\\
0\\
\vdots\\
0\\
f_j/n
\end{pmatrix},
\end{equation}
where $f_j$ is defined as follows:
\begin{equation*}\label{8}
df=\sum_{j=1}^n(\sum_{i=1}^nn\lambda_i^{n-1}\lambda_{ij})\theta_j:=\sum_{j=1}^nf_j\theta_j.
\end{equation*}
Denote the $n\times n$ Vandermonde matrix on the left hand of (\ref{10}) by $D$. It is known that its determinant
\begin{equation*}\label{11}
\gamma:=\det D=\prod_{
k,l=1;~k>l}^n(\lambda_k-\lambda_l)\neq 0.
\end{equation*}
Then it follows from the equations (\ref{10}) that
\begin{eqnarray}\label{12}
\lambda_{ij}&=&(-1)^{i+n}\frac{f_j}{n\gamma}\prod_{k,l=1;~k,l\neq i;~k>l
}^n(\lambda_k-\lambda_l)\nonumber\\
&=&(-1)^{n+1}\frac{f_j}{n}\cdot\frac{1}{\prod\limits_{k=1;~
k\neq i}^n(\lambda_k-\lambda_i)}.
\end{eqnarray}

\vspace{3mm}

\subsection{The $(n-1)$-form  $\psi$.}\quad
As in \cite{TWY20}, we define an $(n-1)$-form $\psi$ as follows:
\begin{equation*}\label{psi}
\psi=\sum_{\sigma}S(\sigma)\theta_{i_1}\wedge\theta_{i_2}\wedge\cdots\wedge\theta_{i_{n-2}}\wedge\omega_{i_{n-1}i_n},
\end{equation*}
where $\sigma(1,\ldots,n)=(i_1,\ldots,i_n)$ is a permutation and $S(\sigma)$ is the sign of $\sigma$. By Lemma 4.1. of \cite{TWY20}, we know that $\psi$ is globally well defined on $Y$.

From \cite{TWY20}, we also have the differential of $\psi$ as follows:
\begin{equation}\label{dpsi}
(-1)^{n+1} d\psi = \left((n-2)!\cdot R_M+\frac{(n-3)!}{n^2}\sum_{r=1}^n (-L(r))f_r^2\right)\cdot\texttt{vol}\quad\quad~\textup{on}~~Y,
\end{equation}
where $\texttt{vol}$ is the volume form and $L(r)$ is defined as
\begin{equation*}
  L(r):=\sum_{p,q=1;~p\neq q;~ p,q\neq r
}^n \frac{1}{(\lambda_r-\lambda_p)(\lambda_r-\lambda_q)\cdot\prod\limits_{k=1;~k\neq p}^n(\lambda_k-\lambda_p)\cdot\prod\limits_{l=1;~l\neq q}^n(\lambda_l-\lambda_q)}.
\end{equation*}
By Lemma 2.1 ( a key lemma ) of \cite{TWY20}, we know that $L(r)<0$ for each $r=1,\cdots, n$. Then it follows from the assumption $R_M\geq 0$ that
\begin{equation}\label{int dpsi}
\int_Y(-1)^{n+1} d\psi \geq 0.
\end{equation}

Next, we are going to calculate $df\wedge\psi$.

From (\ref{beta1}) and (\ref{7}), it follows that
\begin{eqnarray}\label{df wedge psi}
&&df\wedge\psi\\
&=&\sum_{\sigma}S(\sigma)\left(\sum_{k=1}^nf_k\theta_k\right)\wedge\theta_{i_1}\wedge\theta_{i_2}\wedge\cdots\wedge\theta_{i_{n-2}}\wedge\omega_{i_{n-1}i_n} \nonumber\\
&=& (-1)^n  \sum_{\sigma}S(\sigma)\left(f_{i_{n-1}}\beta_{i_{n-1}i_ni_n}-f_{i_{n}}\beta_{i_{n-1}i_ni_{n-1}}\right)\theta_{i_1}\wedge\theta_{i_2}\wedge\cdots\wedge\theta_{i_{n}}.\nonumber
\end{eqnarray}
By (\ref{6}) and (\ref{7}), we have
\begin{eqnarray*}
&&f_{i_{n-1}}\beta_{i_{n-1}i_ni_n}-f_{i_{n}}\beta_{i_{n-1}i_ni_{n-1}}\\
&=&f_{i_{n-1}}\frac{\lambda_{i_ni_{n-1}}}{\lambda_{i_{n-1}}-\lambda_{i_n}}-f_{i_{n}}\frac{\lambda_{i_{n-1}i_n}}{\lambda_{i_{n-1}}-\lambda_{i_n}}  \\
&=& (-1)^{n+1}\frac{1}{n(\lambda_{i_{n-1}}-\lambda_{i_n})^2} \Big(\frac{f_{i_{n-1}}^2}{\prod\limits_{k=1;~k\neq i_{n-1}, i_n}^n(\lambda_k-\lambda_{i_n})}-\frac{f_{i_{n}}^2}{\prod\limits_{k=1;~k\neq i_{n-1}, i_n}^n(\lambda_k-\lambda_{i_{n-1}})}\Big).\end{eqnarray*}
Therefore, 
\begin{eqnarray*}
df\wedge\psi
&=&-\frac{2}{n}\sum_{\sigma}S(\sigma)\frac{f_{i_{n}}^2}{(\lambda_{i_{n-1}}-\lambda_{i_n})^2\prod\limits_{k=1;~k\neq i_{n-1}, i_n}^n(\lambda_k-\lambda_{i_{n-1}})}\theta_{i_1}\wedge\theta_{i_2}\wedge\cdots\wedge\theta_{i_{n}}\\
&=&-\frac{2(n-2)!}{n}\sum_{i,j=1; i\neq j}^n\frac{f_i^2}{(\lambda_i-\lambda_j)^2\prod\limits_{k=1; k\neq i,j}^n(\lambda_k-\lambda_j)}\cdot \texttt{vol}\qquad\qquad \textup{on}~Y.
\end{eqnarray*}

For $i=1,\cdots,n$, 
define $$u_i:=-\frac{2(n-2)!}{n}\sum_{j=1; j\neq i}^n\frac{1}{(\lambda_i-\lambda_j)^2\prod\limits_{k=1; k\neq i,j}^n(\lambda_k-\lambda_j)},$$
thus
\begin{equation}\label{df}
df\wedge\psi=\sum_{i=1}^nu_if_i^2\cdot\texttt{vol}\qquad\qquad \textup{on}~Y.
\end{equation}

When $Y\neq\varnothing$, for any smooth function $\eta:(a, b)\longrightarrow \mathbb{R}$ with compact support, we follow \cite{dB90} to apply the Stokes theorem to 
$$d\left((\eta\circ f)\psi\right)=(\eta\circ f)d\psi+(\eta'\circ f)df\wedge\psi$$
to obtain
\begin{equation}\label{int on Y}
\int_Y(\eta\circ f)d\psi+\int_Y(\eta'\circ f)df\wedge\psi=0.
\end{equation}

Given a small $\varepsilon$, we choose a smooth function $\eta_{\varepsilon}:\mathbb{R}\rightarrow \mathbb{R}$ such that
\begin{itemize}
\item[(1)] $0\leq\eta_{\varepsilon}\leq 1$;
\item[(2)] $\eta_{\varepsilon}(t)=0$, for $a\leq t\leq a+\frac{\varepsilon}{n}$ or $b-\frac{\varepsilon}{n}\leq t\leq b$;
\item[(3)] $\eta_{\varepsilon}(t)=1$, for $a+\varepsilon\leq t\leq b-\varepsilon$;
\item[(4)] $\eta'_{\varepsilon}\geq 0$ on $(-\infty, \frac{a+b}{2})$,  $\eta'_{\varepsilon}\leq 0$ on $(\frac{a+b}{2}, +\infty)$.
\end{itemize}

It follows from (\ref{int dpsi}), (\ref{df}) and (\ref{int on Y}) that
\begin{eqnarray}\label{100}
0&\leq& \int_Y(-1)^{n+1}(\eta_{\varepsilon}\circ f)d\psi=\int_Y(-1)^n(\eta_{\varepsilon}'\circ f)df\wedge\psi\\
&=&\int_Y(\eta_{\varepsilon}'\circ f)\sum_{i=1}^n(-1)^nu_if_i^2\cdot\texttt{vol}\nonumber\\
&\leq& \int_YA\cdot|\eta_{\varepsilon}'\circ f|\cdot|df|^2\cdot\texttt{vol}\nonumber
\end{eqnarray}
where for the last inequality and the number $A$ we have used the following assertion whose proof is left to the end of this paper.

\vspace{2mm}

\noindent
\textbf{Assertion 4.1.} There exists a constant $A>0$ depending only on $n$ and $c_1,c_2,\cdots,c_{n-1}$, such that 
\begin{equation*}
(-1)^nu_i\geq -A ~~\textup{on} ~~Z_{\varepsilon}\quad \textup{and}\quad (-1)^nu_i\leq A ~~\textup{on} ~~X_{\varepsilon}.
\end{equation*}
\vspace{2mm}

On the other hand, for any smooth function $h: \mathbb{R}\rightarrow \mathbb{R}$, we may apply Stokes's theorem to 
$$d^*((h\circ f)df)=(h'\circ f)|df|^2\cdot\texttt{vol}+(h\circ f)\Delta f\cdot \texttt{vol}$$
to obtain 
\begin{equation}\label{101}
\int_{M^n}(h'\circ f)|df|^2\cdot\texttt{vol}+\int_{M^n}(h\circ f)\Delta f\cdot \texttt{vol}=0.
\end{equation}
Let $h_{\varepsilon}: \mathbb{R}\rightarrow \mathbb{R}$ be the smooth function given by
$$h_{\varepsilon}=\left\{\begin{array}{cc}
\eta_{\varepsilon}-1\quad \textup{on}~ (-\infty, \frac{a+b}{2}]\\
1-\eta_{\varepsilon}\quad \textup{on}~ [\frac{a+b}{2}, +\infty)
\end{array}\right.$$
Note that $h'_{\varepsilon}=|\eta'_{\varepsilon}|$. It follows from (\ref{101}) that 
\begin{equation*}
\int_{Y}|\eta_{\varepsilon}'\circ f|\cdot|df|^2\cdot\texttt{vol}=-\int_{M^n}(h_{\varepsilon}\circ f)\Delta f\cdot\texttt{vol}\leq\int_{M^n}|h_{\varepsilon}\circ f|\cdot|\Delta f|\cdot\texttt{vol}
\end{equation*}
By construction $|h_{\varepsilon}|\leq 1$ and $h_{\varepsilon}\circ f=0$ on $Y_{\varepsilon}$.

Next, we will use a generalized version of Lemma 1 in \cite{dB90}. Their proof is for $n=3$, but one can follow their proof and generalize the result to any dimension,
just modifying the tiny mistake in writing $\texttt{vol}\mathcal{V}<\frac{\epsilon_0}{4c}\texttt{vol} M$ to $\texttt{vol}\mathcal{V}<\frac{\epsilon_0}{4c}$:
\begin{equation*}
\lim_{\varepsilon\rightarrow 0}\int_{M^n-Y_{\varepsilon}}|\Delta f|\cdot\texttt{vol}=0\qquad\quad \textup{if}~ X\cup Z\neq \varnothing.
\end{equation*}
Then we obtain 
\begin{eqnarray*}
&&\lim_{\varepsilon\rightarrow 0}\int_{M^n}|h_{\varepsilon}\circ f|\cdot|\Delta f|\cdot\texttt{vol}\\
&=&\lim_{\varepsilon\rightarrow 0}\int_{M^n-Y_{\varepsilon}}|h_{\varepsilon}\circ f|\cdot|\Delta f|\cdot\texttt{vol}\\
&\leq&\lim_{\varepsilon\rightarrow 0}\int_{M^n-Y_{\varepsilon}}|\Delta f|\cdot\texttt{vol}\\
&=&0,
\end{eqnarray*}
and thus
$$\lim_{\varepsilon\rightarrow 0}\int_{Y}|\eta_{\varepsilon}'\circ f|\cdot|df|^2\cdot\texttt{vol}=0.$$
Combining with (\ref{100}), we have
$$\lim_{\varepsilon\rightarrow 0}\int_{Y}(-1)^{n+1}(\eta_{\varepsilon}\circ f)d\psi=0.$$

At last, since by assumption $R_M\geq 0$, Lemma 2.1 of \cite{TWY20} and (\ref{dpsi}) lead us to
$$0\leq\int_{Y_{\varepsilon'}}\frac{(n-3)!}{n^2}\sum_{r=1}^n (-L(r))f_r^2\cdot\texttt{vol}\leq \int_Y(-1)^{n+1}(\eta_{\varepsilon}\circ f) d\psi$$
for all $0<\varepsilon\leq \varepsilon'<\frac{b-a}{2}$, it follows that $f_r=0$ on $Y$ for any $r=1,\cdots,n$. Thus $f$ is constant on $Y$, and furthermore, constant on $M^n$.

The proof of Theorem \ref{TY1} is now complete.

\vspace{5mm}
We conclude this section by giving a proof of 
\vspace{2mm}

\noindent
\textbf{Assertion 4.1.} There exists a constant $A>0$ depending only on $n$ and $c_1,c_2,\cdots,c_{n-1}$, such that 
\begin{equation*}
(-1)^nu_i\geq -A ~~\textup{on} ~~Z_{\varepsilon}\quad \textup{and}\quad (-1)^nu_i\leq A ~~\textup{on} ~~X_{\varepsilon}.
\end{equation*}
\begin{proof}
Recall that $$u_i:=-\frac{2(n-2)!}{n}\sum_{j=1; j\neq i}^n\frac{1}{(\lambda_i-\lambda_j)^2\prod\limits_{k=1; k\neq i,j}^n(\lambda_k-\lambda_j)}.$$
For $j\neq i$, define 
\begin{equation}\label{uij} 
u_{ij}=\frac{1}{(\lambda_i-\lambda_j)^2\prod\limits_{k=1; k\neq i,j}^n(\lambda_k-\lambda_j)},
\end{equation}
then
\begin{equation}\label{ui} 
u_i=-\frac{2(n-2)!}{n}\sum_{j=1; j\neq i}^n u_{ij}.
\end{equation}

As the first step, for the function $f=f(\lambda(p))$ with $\lambda(p)=(\lambda_1(p),\cdots,\lambda_n(p))$, we need to clarify that according to the definition of $a', b'$ and $a, b$, when $f\rightarrow a$ from above or $f\rightarrow b$ from below, $\lambda_1<\lambda_2<\cdots<\lambda_n$ and all the $\lambda_i's$ depend on $f$ continuously. This is generally not right without the restriction on $a' $ and $b'$.

Denote 
\begin{eqnarray*}
&&f^{-1}(b)=(\beta_1,\beta_2,\cdots, \beta_n)\\
&& f^{-1}(a)=(\alpha_1,\alpha_2,\cdots,\alpha_n).
\end{eqnarray*}
By the definition of $a$ and $b$, we are surprised to find from Figure 1 and Figure 2 that when $f\rightarrow a$ or $f\rightarrow b$, the multiplicity of $\alpha_i$ or $\beta_j$ $(i, j=1,\cdots, n)$
is at most $2$. More precisely, when $n$ is odd, 
\begin{eqnarray}\label{n odd}
&&\beta_1\leq\beta_2<\beta_3\leq\beta_4<\cdots<\beta_{n-2}\leq\beta_{n-1}<\beta_n,\\
&&\alpha_1<\alpha_2\leq\alpha_3<\alpha_4\leq\cdots\leq\alpha_{n-2}<\alpha_{n-1}\leq\alpha_n,\nonumber
\end{eqnarray}
and similarly, when $n$ is even,
\begin{eqnarray}\label{n even}
&&\beta_1<\beta_2\leq\beta_3<\beta_4\leq\cdots<\beta_{n-2}\leq\beta_{n-1}<\beta_n,\\
&&\alpha_1\leq\alpha_2<\alpha_3\leq\alpha_4<\cdots\leq\alpha_{n-2}<\alpha_{n-1}\leq\alpha_n,\nonumber
\end{eqnarray}

We will only prove the inequality $(-1)^nu_i\geq -A_1 ~~\textup{on} ~~Z_{\varepsilon}$, the proof for $ (-1)^nu_i\leq A _2~~\textup{on} ~~X_{\varepsilon}$ is similar. Then we take $A:=\max\{A_1, A_2\}$.

We will firstly handle the case that there is only one $\beta_i$ with multiplicity $2$. Suppose when $f\rightarrow b$ from below, it happens that
$$\beta_1<\beta_2<\cdots<\beta_i=\beta_{i+1}<\beta_{i+2}<\cdots<\beta_n.$$
According to Lemma \ref{Y}, we will deal with $u_{pj}$ defined in (\ref{uij}) for each $p$ on $Z_{\varepsilon}$:

\noindent
(1) When $p=i$, observe that 
\begin{eqnarray*}
u_{i,i+1}
&=& (-1)^{i-1}\frac{1}{(\lambda_i-\lambda_{i+1})^2}\cdot\frac{1}{\prod\limits_{k<i}(\lambda_{i+1}-\lambda_k)}\cdot\frac{1}{\prod\limits_{k>i+1}(\lambda_k-\lambda_{i+1})}\\
&\rightarrow&(-1)^{i-1}(+\infty)\quad as~ f\rightarrow b.
\end{eqnarray*}
According to the explanations (\ref{n odd}) and (\ref{n even}), $(-1)^n=(-1)^i$, thus 
$$(-1)^{n+1}u_{i,i+1}\rightarrow +\infty\quad as~ f\rightarrow b.
$$
For $j\neq i, i+1$,
\begin{eqnarray*}
u_{ij}&=&\frac{1}{(\lambda_i-\lambda_j)^2\prod\limits_{k=1; k\neq i,j}^n(\lambda_k-\lambda_j)}\\
&\rightarrow&\frac{1}{(\beta_i-\beta_j)^2\prod\limits_{k=1; k\neq i,j}^n(\beta_k-\beta_j)}\quad as~ f\rightarrow b,
\end{eqnarray*}
which is a finite value. Therefore, as $f\rightarrow b$,
$$(-1)^nu_i=(-1)^{n+1}\frac{2(n-2)!}{n}\sum_{j=1; j\neq i}^n u_{ij}\rightarrow +\infty.$$

\noindent
(2) When $p=i+1$, observe that
\begin{eqnarray*}
u_{i+1,i}
&=& (-1)^{i-1}\frac{1}{(\lambda_{i+1}-\lambda_{i})^2}\cdot\frac{1}{\prod\limits_{k<i}(\lambda_{i}-\lambda_k)}\cdot\frac{1}{\prod\limits_{k>i+1}(\lambda_k-\lambda_{i})}\\
&\rightarrow&(-1)^{i-1}(+\infty)\quad as~ f\rightarrow b.
\end{eqnarray*}
According to the explanations (\ref{n odd}) and (\ref{n even}), $(-1)^n=(-1)^i$, thus 
$$(-1)^{n+1}u_{i+1,i}\rightarrow +\infty\quad as~ f\rightarrow b.$$
For $j\neq i, i+1$,
\begin{eqnarray*}
u_{i+1,j}&=&\frac{1}{(\lambda_{i+1}-\lambda_j)^2\prod\limits_{k=1; k\neq i+1,j}^n(\lambda_k-\lambda_j)}\\
&\rightarrow&\frac{1}{(\beta_{i+1}-\beta_j)^2\prod\limits_{k=1; k\neq i+1,j}^n(\beta_k-\beta_j)}\quad as~ f\rightarrow b
\end{eqnarray*}
which is a finite value. Therefore, as $f\rightarrow b$,
$$(-1)^nu_{i+1}=(-1)^{n+1}\frac{2(n-2)!}{n}\sum_{j=1; j\neq i+1}^n u_{i+1,j}\rightarrow +\infty.$$

\noindent
(3) When $p\neq i, i+1$, we see that 
\begin{eqnarray*}
u_{pi}
&=&\frac{1}{\lambda_{i+1}-\lambda_i}\cdot\frac{1}{(\lambda_p-\lambda_{i})^2\prod\limits_{k=1; k\neq p,i,i+1}^n(\lambda_k-\lambda_{i})},\\
u_{p,i+1}
&=&-\frac{1}{\lambda_{i+1}-\lambda_{i}}\cdot\frac{1}{(\lambda_p-\lambda_{i+1})^2\prod\limits_{k=1; k\neq p,i,i+1}^n(\lambda_k-\lambda_{i+1})},
\end{eqnarray*}
thus
$$u_{pi}+u_{p,i+1}=\frac{1}{\lambda_{i+1}-\lambda_i}\cdot\frac{(\lambda_p-\lambda_{i+1})^2\prod\limits_{k\neq p,i,i+1}(\lambda_k-\lambda_{i+1})-(\lambda_p-\lambda_{i})^2\prod\limits_{k\neq p,i,i+1}(\lambda_k-\lambda_{i})}{(\lambda_p-\lambda_{i})^2(\lambda_p-\lambda_{i+1})^2\prod\limits_{k\neq p,i,i+1}(\lambda_k-\lambda_{i})(\lambda_k-\lambda_{i+1})}.$$
Define 
\begin{eqnarray*}
H(x)&=&(\lambda_p-x)^2\prod\limits_{k\neq p,i,i+1}(\lambda_k-x)\\
&=&(-1)^{n-1}x^{n-1}+a_1x^{n-2}+a_2x^{n-3}+\cdots+a_{n-2}x+a_{n-1},
\end{eqnarray*}
where the coefficients $a_k=a_k(n, \lambda_1,\cdots,\lambda_{i-1},\lambda_{i+2},\cdots,\lambda_n)$, $k=1,\cdots,n-1$.

Therefore, the numerator of $u_{pi}+u_{p,i+1}$ is
\begin{eqnarray*}
&& H(\lambda_{i+1})-H(\lambda_i)\\
&=& (-1)^{n-1}(\lambda_{i+1}^{n-1}-\lambda_{i}^{n-1})+a_1(\lambda_{i+1}^{n-2}-\lambda_{i}^{n-2})+\cdots+a_{n-2}(\lambda_{i+1}-\lambda_{i})\\
&=&(\lambda_{i+1}-\lambda_{i})\cdot\Big((-1)^{n-1}(\lambda_{i+1}^{n-2}+\lambda_{i+1}^{n-3}\lambda_i+\cdots+\lambda_i^{n-2})+\cdots+a_{n-2}\Big)
\end{eqnarray*}
and thus when $f\rightarrow b$,
$$(-1)^{n+1}(u_{pi}+u_{p,i+1})\rightarrow\frac{(n-1)\beta_i^{n-2}+(-1)^{n+1}(n-2)\overline{a}_1\beta_i^{n-3}+\cdots+(-1)^{n+1}\overline{a}_{n-2}}{(\beta_p-\beta_i)^4\prod\limits_{k\neq p,i,i+1}(\beta_k-\beta_{i})^2},$$
where $\overline{a}_k=\overline{a}_k(n, \beta_1,\cdots,\beta_{i-1},\beta_{i+2},\cdots,\beta_n)$, $k=1,\cdots,n-2$. The limit is a finite value.

For $j\neq i, i+1$, we see that 
\begin{equation*}
u_{p,j}
\rightarrow\frac{1}{(\beta_{p}-\beta_j)^2\prod\limits_{k\neq p,j}(\beta_k-\beta_j)}\quad as~ f\rightarrow b,
\end{equation*}
which is also a finite value.

Therefore, 
\begin{eqnarray*}
(-1)^nu_{p}
&=&\frac{2(n-2)!}{n}\Big( (-1)^{n+1}(u_{pi}+u_{p,i+1})+(-1)^{n+1}\sum_{j\neq p,i,i+1}u_{pj}\Big)\\
&\rightarrow&
\frac{2(n-2)!}{n}\Big(\frac{(n-1)\beta_i^{n-2}+(-1)^{n+1}(n-2)\overline{a}_1\beta_i^{n-3}+\cdots+(-1)^{n+1}\overline{a}_{n-2}}{(\beta_p-\beta_i)^4\prod\limits_{k\neq p,i,i+1}(\beta_k-\beta_{i})^2}\\
&&\qquad\qquad+(-1)^{n+1}\sum_{j\neq p,i,i+1}\frac{1}{(\beta_{p}-\beta_j)^2\prod\limits_{k\neq p,j}(\beta_k-\beta_j)}\Big)\\
&:=&A_1(n,p),
\end{eqnarray*}
which is a finite value.

For sufficiently small $\varepsilon$, define
$$-A_1=-\max\limits_{p\neq i,i+1}\{|A_1(n,p)|\}-1,$$
then we arrive at
$$(-1)^{n}u_p\geq -A_1\qquad \textup{on}~Z_{\varepsilon}.$$

\vspace{2mm}

If there are more than one $\beta_i$ with multiplicity $2$, we only deal with the case that there are two $\beta_i$'s with multiplicity $2$, since the proof for the other cases are verbatim.

Suppose when $f\rightarrow b$ from below, it happens that
$$\beta_1<\cdots<\beta_i=\beta_{i+1}<\cdots<\beta_{s}=\beta_{s+1}<\cdots<\beta_n.$$

\noindent
(1) When $p=i$, we see that 
\begin{eqnarray*}
u_{i,i+1}
&=& (-1)^{i-1}\frac{1}{(\lambda_i-\lambda_{i+1})^2}\cdot\frac{1}{\prod\limits_{k<i}(\lambda_{i+1}-\lambda_k)}\cdot\frac{1}{\prod\limits_{k>i+1}(\lambda_k-\lambda_{i+1})}\\
&\rightarrow&(-1)^{i-1}(+\infty)\quad as~ f\rightarrow b.
\end{eqnarray*}
According to the explanations (\ref{n odd}) and (\ref{n even}), $(-1)^n=(-1)^i$, thus 
$$(-1)^{n+1}u_{i,i+1}\rightarrow +\infty\quad as~ f\rightarrow b\quad as~ f\rightarrow b.$$
As we discussed before, 
$$(-1)^{n+1}(u_{is}+u_{i,s+1})\rightarrow\frac{(n-1)\beta_s^{n-2}+(-1)^{n+1}(n-2)\overline{a}_1\beta_s^{n-3}+\cdots+(-1)^{n+1}\overline{a}_{n-2}}{(\beta_i-\beta_s)^4\prod\limits_{k\neq p,s,s+1}(\beta_k-\beta_{s})^2},$$
where $\overline{a}_k=\overline{a}_k(n, \beta_1,\cdots,\beta_{s-1},\beta_{s+2},\cdots,\beta_n)$, $k=1,\cdots,n-2$. The limit is a finite value.

For $j\neq i, i+1,s,s+1$, 
\begin{eqnarray*}
u_{ij}
&\rightarrow&\frac{1}{(\beta_i-\beta_j)^2\prod\limits_{k=1; k\neq i,j}^n(\beta_k-\beta_j)}\quad as~ f\rightarrow b
\end{eqnarray*}
which is a finite value. Therefore,
$$(-1)^nu_i=(-1)^{n+1}\frac{2(n-2)!}{n}\sum_{j=1; j\neq i}^n u_{ij}\rightarrow +\infty\quad as~ f\rightarrow b.$$

\noindent
(2) When $p=i+1$, $s$ or $s+1$, the discussion is similar and 
$$(-1)^nu_{i+1}\rightarrow +\infty,\quad (-1)^nu_{s}\rightarrow +\infty,\quad (-1)^nu_{s+1}\rightarrow +\infty\quad as~ f\rightarrow b.$$

\noindent
(3) When $p\neq i$, $i+1$, $s$ or $s+1$, as $f\rightarrow b$,
$$(-1)^{n+1}(u_{pi}+u_{p,i+1})\rightarrow\frac{(n-1)\beta_i^{n-2}+(-1)^{n+1}(n-2)\overline{a}_1\beta_i^{n-3}+\cdots+(-1)^{n+1}\overline{a}_{n-2}}{(\beta_p-\beta_i)^4\prod\limits_{k\neq p,i,i+1}(\beta_k-\beta_{i})^2},$$
$$(-1)^{n+1}(u_{ps}+u_{p,s+1})\rightarrow\frac{(n-1)\beta_s^{n-2}+(-1)^{n+1}(n-2)\overline{a}'_1\beta_s^{n-3}+\cdots+(-1)^{n+1}\overline{a}'_{n-2}}{(\beta_p-\beta_s)^4\prod\limits_{k\neq p,s,s+1}(\beta_k-\beta_{s})^2},$$
where $\overline{a}_k=\overline{a}_k(n, \beta_1,\cdots,\beta_{i-1},\beta_{i+2},\cdots,\beta_n)$, and $\overline{a}'_k=\overline{a}'_k(n, \beta_1,\cdots,\beta_{s-1},\beta_{s+2},\cdots,\beta_n)$, $k=1,\cdots,n-2$. The limits are both finite.

For $j\neq p, i, i+1,s,s+1$, we see that 
\begin{equation*}
u_{p,j}
\rightarrow\frac{1}{(\beta_{p}-\beta_j)^2\prod\limits_{k\neq p,j}(\beta_k-\beta_j)}\quad as~ f\rightarrow b,
\end{equation*}
which is also a finite value.

Therefore, 
\begin{eqnarray*}
(-1)^nu_{p}
&=&\frac{2(n-2)!}{n}\Big( (-1)^{n+1}(u_{pi}+u_{p,i+1})+(-1)^{n+1}(u_{ps}+u_{p,s+1})\\
&&\qquad\qquad\quad+(-1)^{n+1}\sum_{j\neq p,i,i+1,s,s+1}u_{pj}\Big)\\
&\rightarrow&
\frac{2(n-2)!}{n}\Big(\frac{(n-1)\beta_i^{n-2}+(-1)^{n+1}(n-2)\overline{a}_1\beta_i^{n-3}+\cdots+(-1)^{n+1}\overline{a}_{n-2}}{(\beta_p-\beta_i)^4\prod\limits_{k\neq p,i,i+1}(\beta_k-\beta_{i})^2}\\
&&\qquad\qquad+\frac{(n-1)\beta_s^{n-2}+(-1)^{n+1}(n-2)\overline{a}'_1\beta_s^{n-3}+\cdots+(-1)^{n+1}\overline{a}'_{n-2}}{(\beta_p-\beta_s)^4\prod\limits_{k\neq p,s,s+1}(\beta_k-\beta_{s})^2}\\
&&\qquad\qquad+(-1)^{n+1}\sum_{j\neq p,i,i+1,s,s+1}\frac{1}{(\beta_{p}-\beta_j)^2\prod\limits_{k\neq p,j}(\beta_k-\beta_j)}\Big)\\
&:=&A_1(n,p),
\end{eqnarray*}
which is a finite value.

Again, for sufficiently small $\varepsilon$, define
$$-A_1=-\max\limits_{p\neq i,i+1,s,s+1}\{|A_1(n,p)|\}-1,$$
then we arrive at
$$(-1)^{n}u_p\geq -A_1\qquad \textup{on}~Z_{\varepsilon}.$$

\end{proof}

\end{document}